\title[Bilateral Gamma Distributions and Processes in Finance]{Bilateral Gamma Distributions and Processes in Financial Mathematics}
\author{Uwe K{\"u}chler \and Stefan Tappe}
\thanks{We are grateful to Michael S\o{}rensen and an anonymous referee for their helpful remarks and discussions.}
\newif\ifpdf
\numberwithin{equation}{section}
\newtheorem{satz}{Satz}[section]
\newtheorem{theorem}[satz]{Theorem}
\newtheorem{proposition}[satz]{Proposition}
\newtheorem{corollary}[satz]{Corollary}
\newtheorem{lemma}[satz]{Lemma}
\newcommand{\abs}[1]{\lvert #1 \rvert}
\renewcommand{\Re}{\operatorname{Re}}
\begin{document}

\maketitle\thispagestyle{empty}

\begin{abstract}
We present a class of L\'evy processes for modelling financial market fluctuations: Bilateral Gamma processes. Our starting point is to explore the properties of bilateral Gamma distributions, and then we turn to their associated L\'evy processes. We treat exponential L\'evy stock models with an underlying bilateral Gamma process as well as term structure models driven by
bilateral Gamma processes and apply our results to a set of real financial data (DAX 1996-1998). \bigskip

\textbf{Key Words:} bilateral Gamma distributions, parameter estimation, bilateral Gamma processes, measure transformations, stock models, option pricing, term structure models
\end{abstract}

\keywords{60G51, 91G20}

\section{Introduction}

In recent years more realistic stochastic models for price movements in financial markets
have been developed, for example by replacing the classical Brownian motion by L{\'e}vy processes.
Popular examples of such L\'evy processes are generalized hyperbolic processes \cite{Barndorff-Nielsen} and their subclasses, Variance Gamma processes \cite{Madan} and CGMY-processes \cite{CGMY}. A survey about L\'evy processes used for applications to finance can for instance be found in \cite[Chap. 5.3]{Schoutens}. 

We propose another family of L{\'e}vy processes which seems to be interesting: Bilateral Gamma processes, which are defined as the difference of two independent Gamma processes. This four-parameter class of processes is more flexible than Variance Gamma processes, but still analytically tractable, in particular these processes have a simple cumulant generating function.

The aim of this article is twofold: First, we investigate the properties of these processes
as well as their generating distributions, and show how they are related to other distributions
considered in the literature. 

As we shall see, they have a series of properties making
them interesting for applications: Bilateral Gamma distributions are selfdecomposable, stable under convolution and have a simple cumulant generating function. The associated L\'evy processes are finite-variation processes making infinitely many jumps at each interval with positive length, and all their increments are bilateral Gamma distributed. In particular, one can easily provide simulations for the trajectories of bilateral Gamma processes.

So, our second goal
is to apply bilateral Gamma processes for modelling financial market fluctuations.
We treat exponential L{\'e}vy stock market models and derive a closed formula for pricing European Call Options. As an illustration, we apply our results to the evolution of the German stock index DAX over the period of three years. Term structure models driven by
bilateral Gamma processes are considered as well.

\section{Bilateral Gamma distributions}\label{sec-distribution}

A popular method for building L{\'e}vy processes is to take a subordinator
$S$, a Brownian motion $W$ which is independent of $S$, and to construct
the time-changed Brownian motion $X_t := W(S_t)$. For instance, generalized hyperbolic processes and Variance Gamma processes are constructed in this fashion. We do not go this way. Instead,
we define $X := Y - Z$ as the difference of two independent subordinators $Y,Z$.
These subordinators should have a simple characteristic function, because then
the characteristic function of the resulting L{\'e}vy process $X$ will be simple, too.
Guided by these ideas, we choose Gamma processes as subordinators. 

To begin with, we need the following slight generalization of
Gamma distributions. For $\alpha > 0$ and $\lambda \in \mathbb{R}
\setminus \{ 0 \}$, we define the $\Gamma(\alpha,\lambda)$-distribution by the density 
\begin{align*}
f(x) = \frac{|\lambda|^{\alpha}}{\Gamma(\alpha)} |x|^{\alpha - 1}
e^{-|\lambda| |x|} \left( \mathbbm{1}_{\{ \lambda > 0\}} \mathbbm{1}_{\{ x > 0 \}}
+ \mathbbm{1}_{\{ \lambda < 0\}} \mathbbm{1}_{\{ x < 0 \}} \right), \quad x \in \mathbb{R}.
\end{align*}
If $\lambda > 0$, then this is just the well-known Gamma distribution, and for $\lambda < 0$ one has a Gamma distribution concentrated on the negative half axis.
One verifies that for each $(\alpha,\lambda) \in (0,\infty) \times \mathbb{R} \setminus \{ 0 \}$
the characteristic function of a $\Gamma(\alpha,\lambda)$-distribution is given by
\begin{align}\label{cf-gamma}
\varphi(z) = \left( \frac{\lambda}{\lambda -
iz} \right)^{\alpha}, \quad z \in \mathbb{R}
\end{align}
where the power $\alpha$ stems from the main branch of the complex logarithm. 

A {\em bilateral Gamma distribution} with parameters
$\alpha^+, \lambda^+ ,\alpha^-, \lambda^- > 0$ is defined as the convolution
\begin{align*}
\Gamma(\alpha^+,\lambda^+;\alpha^-,\lambda^-) :=
\Gamma(\alpha^+,\lambda^+) * \Gamma(\alpha^-,-\lambda^-).
\end{align*}
Note that for independent random variables $X,Y$ with $X \sim \Gamma(\alpha^+,\lambda^+)$ and $Y \sim \Gamma(\alpha^-,\lambda^-)$ the difference has a bilateral Gamma distribution $X - Y \sim \Gamma(\alpha^+,\lambda^+;\alpha^-,\lambda^-)$. 

By (\ref{cf-gamma}), the characteristic function of a bilateral Gamma distribution is
\begin{align}\label{cf-bilateral}
\varphi(z) = \left(
\frac{\lambda^+}{\lambda^+ - iz} \right)^{\alpha^+} \left(
\frac{\lambda^-}{\lambda^- + iz} \right)^{\alpha^-}, \quad z \in \mathbb{R}.
\end{align}

\begin{lemma}\label{sums-of-bilateral} \mbox{}
\begin{enumerate}
\item Suppose $X \sim \Gamma(\alpha_1^+,\lambda^+;\alpha_1^-,\lambda^-)$ and $Y \sim
\Gamma(\alpha_2^+,\lambda^+;\alpha_2^-,\lambda^-)$,
and that $X$ and $Y$ are independent. Then $X + Y \sim \Gamma(\alpha_1^+ + \alpha_2^+,\lambda^+;\alpha_1^- +
\alpha_2^-,\lambda^-)$.

\item For $X \sim \Gamma(\alpha^+,\lambda^+;\alpha^-,\lambda^-)$ and $c > 0$ it holds
$cX \sim \Gamma(\alpha^+,\frac{\lambda^+}{c};\alpha^-,\frac{\lambda^-}{c})$.

\end{enumerate}
\end{lemma}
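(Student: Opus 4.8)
The plan is to prove both statements by passing to characteristic functions and invoking the uniqueness theorem, since the characteristic function of a bilateral Gamma distribution is explicitly available from (\ref{cf-bilateral}). In each case I would compute the characteristic function of the transformed random variable, recognize it as the characteristic function of the asserted bilateral Gamma distribution, and conclude by uniqueness.

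For part (1), I would start from the independence of $X$ and $Y$, which gives $\varphi_{X+Y}(z) = \varphi_X(z)\varphi_Y(z)$ for every $z \in \mathbb{R}$. Substituting the two characteristic functions from (\ref{cf-bilateral}) produces a product of four complex powers. The essential step is to merge the two factors with base $\frac{\lambda^+}{\lambda^+ - iz}$ and, separately, the two factors with base $\frac{\lambda^-}{\lambda^- + iz}$, using the exponent law $w^a w^b = w^{a+b}$ for a fixed base $w \neq 0$. After combining, the product is exactly the characteristic function of $\Gamma(\alpha_1^+ + \alpha_2^+, \lambda^+; \alpha_1^- + \alpha_2^-, \lambda^-)$, so uniqueness yields the claim.

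For part (2), I would use the elementary scaling relation $\varphi_{cX}(z) = \varphi_X(cz)$. Inserting $cz$ into (\ref{cf-bilateral}) and then dividing numerator and denominator of each fraction by $c > 0$ rewrites the bases as $\frac{\lambda^+/c}{\lambda^+/c - iz}$ and $\frac{\lambda^-/c}{\lambda^-/c + iz}$, which identifies the result as the characteristic function of $\Gamma(\alpha^+, \frac{\lambda^+}{c}; \alpha^-, \frac{\lambda^-}{c})$. Again uniqueness finishes the argument.

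The only point requiring care is the handling of the complex powers in part (1), since the exponents $\alpha_i^\pm$ are real and the powers are taken with respect to the principal branch of the logarithm. One must rely on the additivity $w^a w^b = e^{a\operatorname{Log} w} e^{b\operatorname{Log} w} = e^{(a+b)\operatorname{Log} w} = w^{a+b}$, valid for a single fixed base, rather than on the multiplicativity law $(uv)^a = u^a v^a$, which can fail for the principal branch. Because the bases coincide within each of the two factors being merged, no branch-cut ambiguity arises, and this—the main, though very mild, obstacle—is resolved.
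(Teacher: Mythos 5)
Your proposal is correct and follows exactly the route the paper takes: the paper's own proof is a one-line remark that both assertions follow from the characteristic function (\ref{cf-bilateral}), and you have simply spelled out the computation (independence giving the product of characteristic functions, the scaling identity $\varphi_{cX}(z)=\varphi_X(cz)$, and the uniqueness theorem). Your extra care about combining principal-branch powers with a common base is a sound, if mild, refinement of what the paper leaves implicit.
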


\begin{proof}
The asserted properties follow from expression (\ref{cf-bilateral}) of the characteristic function.
\end{proof}

As it is seen from the characteristic function (\ref{cf-bilateral}),
bilateral Gamma distributions are stable under convolution, and they are \textit{infinitely divisible}. It follows from \cite[Ex. 8.10]{Sato} that both, the drift and the Gaussian part in the L\'evy-Khintchine formula (with truncation function $h = 0$), are equal to zero, and that the L\'evy measure is given by
\begin{align}\label{levy-measure}
F(dx) = \left( \frac{\alpha^+}{x} e^{-\lambda^+ x}
\mathbbm{1}_{(0,\infty)}(x) + \frac{\alpha^-}{|x|} e^{-\lambda^-
|x|} \mathbbm{1}_{(-\infty,0)}(x) \right)dx.
\end{align}
Thus, we can also express the characteristic function $\varphi$ as
\begin{align}\label{char-fkt-self}
\varphi(z) = \exp \left( \int_{\mathbb{R}} \left( e^{izx} - 1 \right) \frac{k(x)}{x} dx \right), \quad z \in \mathbb{R}
\end{align}
where $k : \mathbb{R} \rightarrow \mathbb{R}$ is the function
\begin{align}\label{fkt-k-self}
k(x) = \alpha^+ e^{-\lambda^+ x} \mathbbm{1}_{(0,\infty)}(x) - \alpha^- e^{-\lambda^- |x|} \mathbbm{1}_{(-\infty,0)}(x), \quad x \in \mathbb{R}
\end{align}
which is decreasing on each of $(-\infty,0)$ and $(0,\infty)$.
It is an immediate consequence of \cite[Cor. 15.11]{Sato} that bilateral Gamma distributions are \textit{selfdecomposable}. By (\ref{levy-measure}), it moreover holds
\begin{align*}
\int_{|x| > 1} e^{zx} F(dx) < \infty \quad \text{for all $z \in (-\lambda^-,\lambda^+)$.}
\end{align*}
Consequently, the \textit{cumulant generating function}
\begin{align*}
\Psi(z) = \ln \mathbb{E} \left[ e^{zX} \right] \quad \text{(where $X \sim \Gamma(\alpha^+,\lambda^+;\alpha^-,\lambda^-)$)}
\end{align*}
exists on $(-\lambda^-,\lambda^+)$, and $\Psi$ and $\Psi'$ are, with regard to (\ref{cf-bilateral}), given by
\begin{align}\label{cumulant-gamma}
\Psi(z) &= \alpha^+ \ln \left( \frac{\lambda^+}{\lambda^+ - z}
\right) + \alpha^- \ln \left( \frac{\lambda^-}{\lambda^- + z} \right),
\quad z \in (-\lambda^-, \lambda^+),
\\ \label{cumulant-gamma-derivative} \Psi'(z) &= \frac{\alpha^+}{\lambda^+ - z} - \frac{\alpha^-}{\lambda^- + z}, \quad z \in (-\lambda^-, \lambda^+).
\end{align}
Hence, the $n$-th order cumulant $\kappa_n = \frac{\partial^n}{\partial
z^n} \Psi(z) |_{z=0}$ is given by
\begin{align}\label{cumulant-kappa-gamma}
\kappa_n = (n-1)! \left( \frac{\alpha^+}{(\lambda^+)^n} + (-1)^n
\frac{\alpha^-}{(\lambda^-)^n} \right), \quad n \in \mathbb{N} = \{ 1,2,\ldots \}.
\end{align}
In particular, for a
$\Gamma(\alpha^+,\lambda^+;\alpha^-,\lambda^-)$-distributed random
variable $X$, we can specify
\begin{itemize}
\item The expectation
\begin{align}\label{expectation-bilateral}
\mathbb{E}[X] = \kappa_1 = \frac{\alpha^+}{\lambda^+} -
\frac{\alpha^-}{\lambda^-}.
\end{align}

\item The variance
\begin{align}\label{variance-bilateral}
{\rm Var}[X] = \kappa_2 = \frac{\alpha^+}{(\lambda^+)^2} +
\frac{\alpha^-}{(\lambda^-)^2}.
\end{align}

\item The Charliers skewness
\begin{align}
\gamma_1(X) = \frac{\kappa_3}{\kappa_2^{3/2}} = \frac{2 \left( \frac{\alpha^+}{(\lambda^+)^3} - \frac{\alpha^-}{(\lambda^-)^3} \right)}{\left(
\frac{\alpha^+}{(\lambda^+)^2} + \frac{\alpha^-}{(\lambda^-)^2}
\right)^{3/2}}.
\end{align}

\item The kurtosis
\begin{align}
\gamma_2(X) = 3 + \frac{\kappa_4}{\kappa_2^2} = 3 + \frac{6 \left( \frac{\alpha^+}{(\lambda^+)^4} +
\frac{\alpha^-}{(\lambda^-)^4} \right)}{\left(
\frac{\alpha^+}{(\lambda^+)^2} + \frac{\alpha^-}{(\lambda^-)^2}
\right)^2}.
\end{align}
\end{itemize}

It follows that bilateral Gamma distributions are \textit{leptokurtic}.

\section{Related classes of distributions}\label{sec-related}

As apparent from the L{\'e}vy measure (\ref{levy-measure}), bilateral
Gamma distributions are special cases of \textit{generalized tempered
stable distributions} \cite[Chap. 4.5]{Cont-Tankov}. This six-parameter family is
defined by its L{\'e}vy measure
\begin{align*}
F(dx) = \left( \frac{\alpha^+}{x^{1+\beta^+}} e^{-\lambda^+ x}
\mathbbm{1}_{(0,\infty)}(x) + \frac{\alpha^-}{|x|^{1+\beta^-}} e^{-\lambda^-
|x|} \mathbbm{1}_{(-\infty,0)}(x) \right)dx.
\end{align*}
The \textit{CGMY-distributions}, see \cite{CGMY}, are a four-parameter family with L{\'e}vy measure
\begin{align*}
F(dx) = \left( \frac{C}{x^{1+Y}} e^{-M x}
\mathbbm{1}_{(0,\infty)}(x) + \frac{C}{|x|^{1+Y}} e^{-G
|x|} \mathbbm{1}_{(-\infty,0)}(x) \right)dx.
\end{align*}
We observe that
some bilateral Gamma distributions are
CGMY-distributions, and vice versa. 

As the upcoming result reveals, bilateral Gamma distributions are not closed under weak
convergence.

\begin{proposition}
Let $\lambda^+, \lambda^- > 0$ be arbitrary. Then the following
convergence holds:
\begin{align*}
\Gamma \left( \frac{(\lambda^+)^2 \lambda^- n}{\lambda^+ +
\lambda^-} ,\lambda^+ \sqrt{n}; \frac{\lambda^+ (\lambda^-)^2
n}{\lambda^+ + \lambda^-}, \lambda^- \sqrt{n} \right)
\overset{w}{\rightarrow} N(0,1) \quad \text{for $n \rightarrow
\infty$.}
\end{align*}
\end{proposition}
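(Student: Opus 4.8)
The plan is to establish weak convergence through L\'evy's continuity theorem, i.e.\ by showing that the characteristic functions $\varphi_n$ of the distributions on the left-hand side converge pointwise to $e^{-z^2/2}$, the characteristic function of $N(0,1)$. Writing $\alpha_n^{\pm}$ for the shape parameters and $\lambda_n^{\pm} = \lambda^{\pm}\sqrt{n}$ for the rate parameters occurring in the statement, formula (\ref{cf-bilateral}) gives
\[
\varphi_n(z) = \left( \frac{\lambda^+\sqrt{n}}{\lambda^+\sqrt{n} - iz} \right)^{\alpha_n^+} \left( \frac{\lambda^-\sqrt{n}}{\lambda^-\sqrt{n} + iz} \right)^{\alpha_n^-},
\]
and hence, taking the principal branch of the logarithm,
\[
\ln \varphi_n(z) = -\alpha_n^+ \ln\left( 1 - \frac{iz}{\lambda^+\sqrt{n}} \right) - \alpha_n^- \ln\left( 1 + \frac{iz}{\lambda^-\sqrt{n}} \right).
\]

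First I would fix $z \in \mathbb{R}$ and expand each logarithm. Since the arguments $w_n^{\pm} := iz/(\lambda^{\pm}\sqrt{n})$ are purely imaginary and tend to $0$, the Taylor expansion $\ln(1+w) = w - \tfrac{1}{2}w^2 + O(|w|^3)$ applies for $n$ large, yielding
\[
\ln \varphi_n(z) = \left( \frac{\alpha_n^+}{\lambda^+\sqrt{n}} - \frac{\alpha_n^-}{\lambda^-\sqrt{n}} \right) iz - \frac{1}{2} \left( \frac{\alpha_n^+}{(\lambda^+)^2 n} + \frac{\alpha_n^-}{(\lambda^-)^2 n} \right) z^2 + R_n(z),
\]
where $R_n(z)$ collects the cubic and higher terms. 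The crucial point is that the four parameters have been chosen precisely so that the first-order coefficient vanishes while the second-order coefficient equals one: substituting $\alpha_n^+ = (\lambda^+)^2\lambda^- n/(\lambda^+ + \lambda^-)$ and $\alpha_n^- = \lambda^+(\lambda^-)^2 n/(\lambda^+ + \lambda^-)$ shows that both $\alpha_n^+/(\lambda^+\sqrt{n})$ and $\alpha_n^-/(\lambda^-\sqrt{n})$ reduce to $\lambda^+\lambda^-\sqrt{n}/(\lambda^+ + \lambda^-)$, so that their difference is $0$, whereas $\alpha_n^+/((\lambda^+)^2 n) = \lambda^-/(\lambda^+ + \lambda^-)$ and $\alpha_n^-/((\lambda^-)^2 n) = \lambda^+/(\lambda^+ + \lambda^-)$ add up to $1$. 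These are nothing but the identities $\mathbb{E}[X] = 0$ and $\mathrm{Var}[X] = 1$ read off from (\ref{expectation-bilateral}) and (\ref{variance-bilateral}).

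It remains to dispose of the remainder. For fixed $z$ the cubic error in each expansion is of order $|\alpha_n^{\pm}| \, |w_n^{\pm}|^3 = O(n \cdot n^{-3/2}) = O(n^{-1/2})$, so $R_n(z) \to 0$. Therefore $\ln \varphi_n(z) \to -z^2/2$ and thus $\varphi_n(z) \to e^{-z^2/2}$ for every $z \in \mathbb{R}$, whence the continuity theorem delivers the asserted weak convergence. The only delicate point is the bookkeeping around the complex logarithm: one works with the principal branch (legitimate because $\lambda^{\pm}\sqrt{n} \mp iz$ stays in the open right half-plane) and verifies that its Taylor remainder obeys the stated cubic bound for the imaginary arguments $w_n^{\pm}$. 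Since these arguments vanish as $n \to \infty$, this is routine and constitutes no genuine obstacle.
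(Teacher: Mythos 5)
Your proof is correct, but it takes a genuinely different route from the paper. The paper's proof is a one-liner: by Lemma \ref{sums-of-bilateral}, the distribution $\Gamma\bigl(\alpha^+ n,\lambda^+\sqrt{n};\alpha^- n,\lambda^-\sqrt{n}\bigr)$ is exactly the law of $\frac{1}{\sqrt{n}}\sum_{i=1}^n X_i$ for i.i.d.\ $X_i\sim\Gamma(\alpha^+,\lambda^+;\alpha^-,\lambda^-)$, and the particular choice $\alpha^+=\frac{(\lambda^+)^2\lambda^-}{\lambda^++\lambda^-}$, $\alpha^-=\frac{\lambda^+(\lambda^-)^2}{\lambda^++\lambda^-}$ makes $\mathbb{E}[X_1]=0$ and $\mathrm{Var}[X_1]=1$ by \eqref{expectation-bilateral} and \eqref{variance-bilateral}, so the classical Central Limit Theorem applies directly. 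You instead compute the characteristic functions from \eqref{cf-bilateral}, Taylor-expand the principal logarithm, and verify pointwise convergence to $e^{-z^2/2}$ --- in effect re-proving the CLT in this special case. Your cancellation of the first-order term and normalization of the second-order term are exactly the mean-zero and unit-variance identities the paper invokes, and your $O(n^{-1/2})$ remainder bound is sound (the branch issue is harmless since the paper defines the power in \eqref{cf-gamma} via the main branch and the relevant arguments stay in the right half-plane). What the paper's approach buys is brevity and a conceptual explanation of where the odd-looking parameters come from; what yours buys is a self-contained, fully explicit argument that does not need the convolution--scaling lemma as an intermediary. Both are complete proofs.
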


\begin{proof}
This is a consequence of the Central
Limit Theorem, Lemma \ref{sums-of-bilateral} and relations
(\ref{expectation-bilateral}), (\ref{variance-bilateral}).
\end{proof}

Bilateral Gamma distributions are special cases of \textit{extended
generalized Gamma convolutions} in the terminology of \cite{Thorin}. These are all infinitely divisible distributions $\mu$ whose characteristic function is of the form
\begin{align*}
\hat{\mu}(z) = \exp \left( izb - \frac{cz^2}{2} - \int_{\mathbb{R}} \left[ \ln \left( 1 - \frac{iz}{y} \right)
+ \frac{izy}{1 + y^2} \right] dU(y) \right), \quad z \in \mathbb{R}
\end{align*}
with $b \in \mathbb{R}, c \geq 0$ and a non-decreasing function $U
: \mathbb{R} \rightarrow \mathbb{R}$ with $U(0) = 0$ satisfying
the integrability conditions
\begin{align*}
\int_{-1}^1 |\ln y|dU(y) < \infty \quad \text{and} \quad \int_{-\infty}^{-1}
\frac{1}{y^2} dU(y) + \int_1^{\infty} \frac{1}{y^2} dU(y) <
\infty.
\end{align*}

Since extended generalized Gamma convolutions are closed under
weak limits, see \cite{Thorin}, every limiting case of bilateral
Gamma distributions is an extended generalized Gamma convolution. 

Let $Z$ be a subordinator (an increasing real-valued L\'evy process) and $X$ a L\'evy process with values in $\mathbb{R}^d$. Assume that $(X_t)_{t \geq 0}$ and $(Z_t)_{t \geq 0}$ are independent. According to \cite[Thm. 30.1]{Sato}, the process $Y$ defined by
\begin{align*}
Y_t(\omega) = X_{Z_t(\omega)}(\omega), \quad t \geq 0
\end{align*}
is a L\'evy process on $\mathbb{R}^d$. The process $(Y_t)_{t \geq 0}$ is said to be \textit{subordinate} to $(X_t)_{t \geq 0}$. Letting $\lambda = \mathcal{L}(Z_1)$ and $\mu = \mathcal{L}(X_1)$, we define the \textit{mixture} $\mu \circ \lambda := \mathcal{L}(Y_1)$. If $\mu$ is a Normal distribution, $\mu \circ \lambda$ is called a \textit{Normal variance-mean mixture} (cf. \cite{Soerensen}), and the process $Y$ is called a \textit{time-changed Brownian motion}.

The characteristic function of $\mu \circ \lambda$ is, according to \cite[Thm. 30.1]{Sato},
\begin{align}\label{cf-mixture}
\varphi_{\mu \circ \lambda} = L_{\lambda}(\log \hat{\mu}(z)), \quad z \in \mathbb{R}^d
\end{align}
where $L_{\lambda}$ denotes the Laplace transform
\begin{align*}
L_{\lambda}(w) = \int_0^{\infty} e^{wx} \lambda(dx), \quad \text{$w \in \mathbb{C}$ with $\Re w \leq 0$}
\end{align*}
and where $\log \hat{\mu}$ denotes the unique continuous logarithm of the characteristic function of $\mu$ \cite[Lemma 7.6]{Sato}.

\textit{Generalized hyperbolic distributions} $GH(\lambda,\alpha,\beta,\delta,\mu)$ with drift $\mu = 0$ are Normal variance-mean mixtures, because (see, e.g., \cite{Eberlein-Hammerstein})
\begin{align}\label{gh-mix}
GH(\lambda,\alpha,\beta,\delta,0) = N(\beta,1) \circ GIG(\lambda,\delta,\sqrt{\alpha^2 - \beta^2}),
\end{align}
where GIG denotes the \textit{generalized inverse Gaussian distribution}. For GIG-distributions it holds the convergence
\begin{align}\label{gig-conv}
GIG(\lambda,\delta,\gamma) \overset{w}{\rightarrow} {\textstyle \Gamma(\lambda,\frac{\gamma^2}{2})} \quad \text{as $\delta \downarrow 0$,} 
\end{align}
see, e.g., \cite[Sec. 5.3.5]{Schoutens}.

The characteristic function of a \textit{Variance Gamma distribution} $VG(\mu,\sigma^2,\nu)$ is (see \cite[Sec. 6.1.1]{Madan}) given by
\begin{align}\label{cf-vg}
\phi(z) = \left( 1 - iz \mu \nu + \frac{\sigma^2 \nu}{2} z^2 \right)^{-\frac{1}{\nu}}, \quad z \in \mathbb{R}.
\end{align}
Hence, we verify by using (\ref{cf-mixture}) that Variance Gamma distributions are Normal variance-mean mixtures, namely it holds
\begin{align}\label{vg-mix}
VG(\mu,\sigma^2,\nu) = N(\mu,\sigma^2) \circ {\textstyle \Gamma(\frac{1}{\nu},\frac{1}{\nu})} = {\textstyle N(\frac{\mu}{\sigma^2},1) \circ \Gamma(\frac{1}{\nu},\frac{1}{\nu \sigma^2})}.
\end{align}
It follows from \cite[Sec. 6.1.3]{Madan} that Variance Gamma distributions are special cases of bilateral Gamma distributions. In Theorem \ref{thm-variance-gamma} we characterize those bilateral Gamma distributions which are Variance Gamma. Before, we need an auxiliary result about the convergence of mixtures.

\begin{lemma}\label{lemma-mix-conv}
$\lambda_n \overset{w}{\rightarrow} \lambda$ and $\mu_n \overset{w}{\rightarrow} \mu$ implies that $\lambda_n \circ \mu_n \overset{w}{\rightarrow} \lambda \circ \mu$ as $n \rightarrow \infty$.
\end{lemma}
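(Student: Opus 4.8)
The plan is to invoke L\'evy's continuity theorem. Since the mixture $\lambda \circ \mu$ is the law at time $1$ of a genuine subordinated L\'evy process, its characteristic function is continuous, so it suffices to prove the pointwise convergence $\varphi_{\lambda_n \circ \mu_n}(z) \to \varphi_{\lambda \circ \mu}(z)$ for every $z \in \mathbb{R}^d$. I first note that all laws occurring are infinitely divisible: the $\lambda_n$ and $\lambda$ are laws of $\mathbb{R}^d$-valued L\'evy processes at time $1$ and the $\mu_n$, $\mu$ are laws of subordinators at time $1$, and these classes (as well as the subclass of laws on $[0,\infty)$) are closed under weak limits; hence $\lambda \circ \mu$ is well defined and formula (\ref{cf-mixture}) applies throughout, yielding
\begin{align*}
\varphi_{\lambda_n \circ \mu_n}(z) = L_{\mu_n}\big( \log \hat{\lambda}_n(z) \big) \quad \text{and} \quad \varphi_{\lambda \circ \mu}(z) = L_{\mu}\big( \log \hat{\lambda}(z) \big).
\end{align*}
Setting $w_n := \log \hat{\lambda}_n(z)$ and $w := \log \hat{\lambda}(z)$, and observing that $\Re w_n = \log |\hat{\lambda}_n(z)| \leq 0$ and $\Re w \leq 0$ (so both Laplace transforms are defined), the problem splits into showing $w_n \to w$ and then $L_{\mu_n}(w_n) \to L_{\mu}(w)$.

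For $w_n \to w$ the delicate point is the choice of branch. Infinitely divisible characteristic functions never vanish, so $\hat{\lambda}$ has no zeros, and weak convergence gives $\hat{\lambda}_n \to \hat{\lambda}$ uniformly on compact sets (pointwise convergence plus the equicontinuity coming from tightness of $\{ \lambda_n \}$). Along a segment $S$ from $0$ to $z$ the ratios $\hat{\lambda}_n / \hat{\lambda}$ then converge to $1$ uniformly, hence for large $n$ lie in a small disc about $1$ on which the principal logarithm $\operatorname{Log}$ is defined; since $\log \hat{\lambda} + \operatorname{Log}(\hat{\lambda}_n / \hat{\lambda})$ is a continuous logarithm of $\hat{\lambda}_n$ on $S$ vanishing at the origin, uniqueness of the distinguished logarithm forces it to equal $\log \hat{\lambda}_n$ on $S$, and letting $n \to \infty$ gives $w_n \to w$. (Equivalently, weak convergence of infinitely divisible laws entails convergence of their L\'evy exponents, cf. \cite[Thm. 8.7]{Sato}.)

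For the Laplace-transform step I would estimate
\begin{align*}
\big| L_{\mu_n}(w_n) - L_{\mu}(w) \big| \leq \int_0^\infty \big| e^{w_n x} - e^{w x} \big|\, \mu_n(dx) + \Big| \int_0^\infty e^{w x}\, \mu_n(dx) - \int_0^\infty e^{w x}\, \mu(dx) \Big|.
\end{align*}
The last term vanishes as $n \to \infty$ because $x \mapsto e^{w x}$ is bounded (by $1$, since $\Re w \leq 0$) and continuous on $[0,\infty)$ while $\mu_n \overset{w}{\rightarrow} \mu$. For the first term I would use the bound $| e^{w_n x} - e^{w x} | \leq |w_n - w|\, x$, valid on $[0,\infty)$ because the real part of the exponent stays nonpositive along the segment from $w$ to $w_n$; this controls the contribution of $[0,R]$ by $|w_n - w|\, R$, whereas on $[R,\infty)$ the integrand is at most $2$, giving a contribution of at most $2\, \mu_n([R,\infty))$. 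Given $\eta > 0$, tightness of $\{ \mu_n \}$ provides $R$ with $\sup_n \mu_n([R,\infty)) < \eta$; fixing such $R$ and letting $n \to \infty$ (so $|w_n - w| \to 0$) yields $\limsup_n | L_{\mu_n}(w_n) - L_{\mu}(w) | \leq 2\eta$, and since $\eta$ is arbitrary the convergence follows.

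The main obstacle is the convergence of the continuous logarithms $w_n \to w$: mere convergence $\hat{\lambda}_n(z) \to \hat{\lambda}(z)$ of the characteristic functions does not fix the branch, and one must exploit the non-vanishing of infinitely divisible characteristic functions together with uniform convergence along a path to the origin (or invoke the continuity theorem for infinitely divisible laws). Once $w_n \to w$ is in hand, the remaining Laplace-transform estimate is routine, its only genuine input being the tightness of $\{ \mu_n \}$ used to make the tails uniformly small.
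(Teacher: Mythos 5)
Your argument is correct and follows essentially the same route as the paper: reduce to pointwise convergence of the characteristic functions via the mixture formula (\ref{cf-mixture}), establish convergence of the distinguished logarithms, and then show the Laplace transforms evaluated at the converging arguments converge. The only difference is that you prove in detail the two ingredients the paper handles by citation or analogy --- the paper invokes \cite[Lemma 7.7]{Sato} for $\log\hat{\mu}_n \to \log\hat{\mu}$ and asserts uniform convergence of $L_{\lambda_n}$ on compacta ``analogously to L\'evy's continuity theorem,'' whereas you reprove the branch-tracking argument and replace the uniform-convergence step by an equivalent direct estimate using tightness.
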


\begin{proof}
Fix $z \in \mathbb{R}^d$. Since $\log \hat{\mu}_n \rightarrow \log \hat{\mu}$ \cite[Lemma 7.7]{Sato}, the set
\begin{align*}
K := \{ \log \hat{\mu}_n(z) : n \in \mathbb{N} \} \cup \{ \log \hat{\mu}(z) \}
\end{align*}
is compact. It holds $L_{\lambda_n} \rightarrow L_{\lambda}$ uniformly on compact sets (the proof is analogous to that of L\'evy's Continuity Theorem). Taking into account (\ref{cf-mixture}), we thus obtain $\varphi_{\lambda_n \circ \mu_n}(z) \rightarrow \varphi_{\lambda \circ \mu}(z)$ as $n \rightarrow \infty$.
\end{proof}

Now we formulate and prove the announced theorem.

\begin{theorem}\label{thm-variance-gamma}
Let $\alpha^+,\lambda^+,\alpha^-,\lambda^- > 0$ and $\gamma = \Gamma(\alpha^+,\lambda^+;\alpha^-,\lambda^-)$. There is
equivalence between:
\begin{enumerate}
\item $\gamma$ is a
Variance Gamma distribution.

\item $\gamma$ is a
limiting case of $GH(\lambda,\alpha,\beta,\delta,0)$, where
$\delta \downarrow 0$, and $\lambda,\alpha,\beta$ are fixed.

\item $\gamma$ is a Normal variance-mean mixture.

\item $\alpha^+ = \alpha^-$.
\end{enumerate}
\end{theorem}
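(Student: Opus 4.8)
The plan is to prove the equivalence as a single cycle $(4)\Rightarrow(1)\Rightarrow(2)\Rightarrow(3)\Rightarrow(4)$, so that each arrow can lean on the mixture results already collected above.

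\emph{The easy arrows.} For $(4)\Rightarrow(1)$ I would work directly with (\ref{cf-bilateral}): putting $\alpha^+=\alpha^-=:\alpha$ and multiplying the two factors together gives
\[
\varphi(z)=\Bigl(1+\tfrac{\lambda^+-\lambda^-}{\lambda^+\lambda^-}\,iz+\tfrac{1}{\lambda^+\lambda^-}\,z^2\Bigr)^{-\alpha},
\]
which is the Variance Gamma form (\ref{cf-vg}) with $\nu=1/\alpha$, $\sigma^2=2\alpha/(\lambda^+\lambda^-)$ and $\mu=\alpha(1/\lambda^+-1/\lambda^-)$. For $(1)\Rightarrow(2)$ and $(2)\Rightarrow(3)$ I would combine (\ref{gh-mix}), (\ref{gig-conv}) and Lemma \ref{lemma-mix-conv}: as $\delta\downarrow0$,
\[
GH(\lambda,\alpha,\beta,\delta,0)=N(\beta,1)\circ GIG(\lambda,\delta,\sqrt{\alpha^2-\beta^2})\ \overset{w}{\rightarrow}\ N(\beta,1)\circ\Gamma\bigl(\lambda,\tfrac{\alpha^2-\beta^2}{2}\bigr).
\]
Comparing this limit with the representation (\ref{vg-mix}) identifies it as $VG$ with $\nu=1/\lambda$, $\beta=\mu/\sigma^2$ and $\alpha^2-\beta^2=2/(\nu\sigma^2)$; since this assignment sends $(\lambda,\alpha,\beta)$ onto all admissible Variance Gamma parameters (note $\alpha>|\beta|$ is automatic), every $VG$ is such a limit, giving $(1)\Rightarrow(2)$, while the displayed limit is visibly a Normal variance-mean mixture, giving $(2)\Rightarrow(3)$.

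\emph{The hard arrow $(3)\Rightarrow(4)$.} Suppose $\gamma=N(a,b^2)\circ\lambda$ with $\lambda$ a subordinator law. Since $\gamma$ has a strictly positive density on all of $\mathbb{R}$, the Gaussian part cannot be degenerate, so $b>0$. Writing $\kappa_Z=\log L_\lambda$ for the Laplace exponent of the subordinator and using (\ref{cumulant-gamma}) on the left and (\ref{cf-mixture}) on the right, the substitution $s=iz$ turns the equality of the two characteristic functions into
\[
\alpha^+\ln\frac{\lambda^+}{\lambda^+-s}+\alpha^-\ln\frac{\lambda^-}{\lambda^-+s}=\kappa_Z\bigl(g(s)\bigr),\qquad g(s):=as+\tfrac{b^2}{2}s^2,
\]
valid on $(-\lambda^-,\lambda^+)$. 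The left-hand side is finite on this interval and diverges logarithmically at the two endpoints, with strengths $\alpha^+$ at $s=\lambda^+$ and $\alpha^-$ at $s=-\lambda^-$, whereas $\kappa_Z$ is finite and analytic on the interior $(-\infty,w^\ast)$ of its domain, where $w^\ast$ is the abscissa of divergence of $L_\lambda$. As $g$ is bounded on the interval we get $w^\ast<\infty$, and matching the two divergences forces $g$ to carry \emph{both} endpoints to $w^\ast$ with $\kappa_Z(w)\to\infty$ as $w\uparrow w^\ast$. Since $g$ is an upward parabola this is possible only if $g(\lambda^+)=g(-\lambda^-)=w^\ast$, which pins down $a=\tfrac{b^2}{2}(\lambda^--\lambda^+)$ and places the vertex of $g$ at the midpoint of the interval.

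The punchline is a comparison of rates. Expanding $g$ at each endpoint gives $w^\ast-g(s)=c\,(\lambda^+-s)+O((\lambda^+-s)^2)$ and $w^\ast-g(s)=c\,(s+\lambda^-)+O((s+\lambda^-)^2)$ with the \emph{same} slope $c=\tfrac{b^2}{2}(\lambda^++\lambda^-)>0$. Reading the identity near the right endpoint therefore yields $\kappa_Z(w^\ast-\eta)=-\alpha^+\ln\eta+O(1)$ as $\eta\downarrow0$, while reading it near the left endpoint yields $\kappa_Z(w^\ast-\eta)=-\alpha^-\ln\eta+O(1)$. Both describe the one function $\kappa_Z$ as $w\uparrow w^\ast$, so $(\alpha^--\alpha^+)\ln\eta=O(1)$, and dividing by $\ln\eta\to-\infty$ gives $\alpha^+=\alpha^-$. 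I expect the delicate points to be exactly the structural facts about $\kappa_Z$ invoked here — that $w^\ast$ is finite (otherwise $g$, being bounded on the interval, could never make the left side blow up) and that $\kappa_Z$ genuinely diverges at $w^\ast$ rather than merely losing analyticity — both of which follow from finiteness and analyticity of $\kappa_Z$ on the interior of its domain together with the divergence of the left-hand side.
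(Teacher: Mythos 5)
Your proposal is correct, and for three of the four arrows it coincides with the paper's proof: the implication $(4)\Rightarrow(1)$ by multiplying out the characteristic function (\ref{cf-bilateral}) and matching it with (\ref{cf-vg}), and the chain $(1)\Rightarrow(2)\Rightarrow(3)$ via (\ref{gh-mix}), (\ref{gig-conv}), (\ref{vg-mix}) and Lemma \ref{lemma-mix-conv}, are exactly what the paper does. The genuine divergence is at $(3)\Rightarrow(4)$: the paper simply cites \cite[Prop.~4.1]{Cont-Tankov}, whereas you give a self-contained analytic argument — equating the cumulant generating function (\ref{cumulant-gamma}) with $\kappa_Z(g(s))$, forcing the parabola $g$ to hit the abscissa of divergence $w^\ast$ at both endpoints of $(-\lambda^-,\lambda^+)$, and comparing the coefficients of the logarithmic blow-up of the single function $\kappa_Z$ near $w^\ast$ read from the two ends. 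I checked this argument and it works: the vertex condition $a=\tfrac{b^2}{2}(\lambda^--\lambda^+)$, the equal slopes $c=\tfrac{b^2}{2}(\lambda^++\lambda^-)$, and the conclusion $(\alpha^+-\alpha^-)\ln\eta=O(1)$ are all right, and the structural facts you flag (that $w^\ast<\infty$, that the domain of finiteness of $L_\lambda$ must be open, and that $\kappa_Z\to\infty$ at $w^\ast$) do follow from the finiteness of the left-hand side on the open interval together with its divergence at the endpoints. The one step worth an explicit sentence is the passage from the characteristic-function identity (\ref{cf-mixture}) to the moment-generating-function identity $\mathbb{E}[e^{sY_1}]=L_\lambda\bigl(g(s)\bigr)$ for real $s$: rather than invoking analytic continuation via ``$s=iz$'', it is cleaner to condition on the subordinator, which gives the identity as an equality in $[0,\infty]$ and is precisely what your domain-matching argument needs. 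What your route buys is independence from the external reference (and a proof that generalizes to other L\'evy measures with log-type endpoint singularities); what the paper's route buys is brevity.
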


\begin{proof}
Assume $\gamma = VG(\mu,\sigma^2,\nu)$. We set
\begin{align*}
(\lambda,\alpha,\beta) := \left( \frac{1}{\nu}, \sqrt{\frac{2}{\nu
\sigma^2} + \left( \frac{\mu}{\sigma^2} \right)^2},
\frac{\mu}{\sigma^2} \right),
\end{align*}
and obtain by using (\ref{gh-mix}), Lemma \ref{lemma-mix-conv}, (\ref{gig-conv}) and (\ref{vg-mix})
\begin{align*}
GH(\lambda,\alpha,\beta,\delta,0) &= N(\beta,1) \circ
GIG(\lambda,\delta,\sqrt{\alpha^2 - \beta^2})
= {\textstyle N \left( \frac{\mu}{\sigma^2},1 \right) \circ
GIG \left( \frac{1}{\nu},\delta,\sqrt{\frac{2}{\nu \sigma^2}} \right)}
\\ &\overset{w}{\rightarrow} N {\textstyle \left( \frac{\mu}{\sigma^2},1 \right)
\circ \Gamma \left( \frac{1}{\nu},\frac{1}{\nu
\sigma^2} \right)} = \gamma \quad
\text{as $\delta \downarrow 0$,}
\end{align*}
showing (1) $\Rightarrow$ (2). If $GH(\lambda,\alpha,\beta,\delta,0) = N(\beta,1) \circ
GIG(\lambda,\delta,\alpha^2 - \beta^2) \overset{w}{\rightarrow} \gamma$ for $\delta \downarrow 0$, then $\gamma$ is a Normal variance-mean mixture by Lemma \ref{lemma-mix-conv}, which proves (2) $\Rightarrow$ (3). The implication (3) $\Rightarrow$ (4) is valid by \cite[Prop. 4.1]{Cont-Tankov}. If $\alpha^+ = \alpha^- =: \alpha$, using the characteristic functions (\ref{cf-bilateral}), (\ref{cf-vg}) we obtain that $\gamma = VG(\mu,\sigma^2,\nu)$ with parameters
\begin{align}\label{para-for-vg}
(\mu,\sigma^2,\nu) := \left( \frac{\alpha}{\lambda^+} -
\frac{\alpha}{\lambda^-}, \frac{2 \alpha} {\lambda^+ \lambda^-},
\frac{1}{\alpha} \right),
\end{align} 
whence (4) $\Rightarrow$ (1) follows.
\end{proof}

We emphasize that bilateral Gamma distributions which are not
Variance Gamma cannot be obtained as limiting case of generalized
hyperbolic distributions. We refer to \cite{Eberlein-Hammerstein},
where all limits of generalized
hyperbolic distributions are determined.

\section{Statistics of bilateral Gamma
distributions}\label{sec-statistics-distributions}

The results of the previous sections show that bilateral Gamma distributions have a series
of properties making them interesting for applications.

Assume we have a set of data, and suppose its law actually is a bilateral Gamma distribution.
Then we need to estimate the parameters. This section is devoted to the statistics of
bilateral Gamma distributions.

Let $X_1,\ldots,X_n$ be an i.i.d. sequence of
$\Gamma(\Theta)$-distributed random
variables, where $\Theta = (\alpha^+,\alpha^-,\lambda^+,\lambda^-)$, and let $x_1,\ldots,x_n$ be a realization. 
We would like to find an estimation $\hat{\Theta}$ of the parameters. We start with the \textit{method of moments}
and estimate the $k$-th moments $m_k = \mathbb{E} [X_1^k]$ for $k=1,\ldots,4$ as
\begin{align}\label{est-moments}
\hat{m}_k = \frac{1}{n} \sum_{i=1}^n x_i^k.
\end{align}
By \cite[p. 346]{Lexikon}, the following relations between the
moments and the cumulants $\kappa_1, \ldots, \kappa_4$ in (\ref{cumulant-kappa-gamma}) are valid:
\begin{align}\label{relation-mom-cum}
\left\{ \begin{array}{rcl} \kappa_1 & = & m_1 \medskip
\\ \kappa_2 & = & m_2 - m_1^2 \medskip
\\ \kappa_3 & = & m_3 - 3m_1m_2 + 2m_1^3 \medskip
\\ \kappa_4 & = & m_4 - 4m_3m_1 - 3m_2^2 + 12m_2m_1^2 - 6m_1^4 \end{array} \right. .
\end{align}
Inserting the cumulants (\ref{cumulant-kappa-gamma}) for $n=1,\ldots,4$ into
(\ref{relation-mom-cum}), we obtain
\begin{align}\label{system-for-method-of-m}
\left\{ \begin{array}{rcl} \alpha^+ \lambda^- - \alpha^- \lambda^+ - c_1 \lambda^+ \lambda^- & = & 0 \medskip
\\ \alpha^+ (\lambda^-)^2 + \alpha^- (\lambda^+)^2 - c_2 (\lambda^+)^2 (\lambda^-)^2 & = & 0 \medskip
\\ \alpha^+ (\lambda^-)^3 - \alpha^- (\lambda^+)^3 - c_3 (\lambda^+)^3 (\lambda^-)^3 & = & 0 \medskip
\\ \alpha^+ (\lambda^-)^4 + \alpha^- (\lambda^+)^4 - c_4 (\lambda^+)^4 (\lambda^-)^4 & = & 0
\end{array} \right. ,
\end{align}
where the constants $c_1, \ldots, c_4$ are given by
\begin{align*}
\left\{ \begin{array}{rcl} c_1 & = & m_1 \medskip
\\ c_2 & = & m_2 - m_1^2 \medskip
\\ c_3 & = & \frac{1}{2}m_3 - \frac{3}{2}m_1m_2 + m_1^3 \medskip
\\ c_4 & = & \frac{1}{6}m_4 - \frac{2}{3}m_3m_1 - \frac{1}{2}m_2^2 + 2m_2m_1^2 - m_1^4 \end{array} \right. .
\end{align*}
We can solve the system of equations (\ref{system-for-method-of-m}) explicitly. In general, if we avoid the trivial cases $(\alpha^+,\lambda^+) = (0,0)$, $(\alpha^-,\lambda^-) = (0,0)$ and $(\lambda^+,\lambda^-) = (0,0)$, it has finitely many, but more than one solution. Notice that with $(\alpha^+,\alpha^-,\lambda^+,\lambda^-)$ the vector $(\alpha^-,\alpha^+,-\lambda^-,-\lambda^+)$ is also a solution of (\ref{system-for-method-of-m}).
However, in practice, see e.g. Section \ref{sec-illustration}, the restriction $\alpha^+, \alpha^-, \lambda^+, \lambda^- > 0$ ensures uniqueness of the solution.

Let us have a closer look at the system of equations (\ref{system-for-method-of-m}) concerning solvability and uniqueness of solutions. Of course, the true values of $\alpha^+, \alpha^-, \lambda^+, \lambda^- > 0$ solve (\ref{system-for-method-of-m}) if the $c_n$ are equal to $\frac{\kappa_n}{(n-1)!}$, see (\ref{cumulant-kappa-gamma}). The left-hand side of (\ref{system-for-method-of-m}) defines a smooth function $G : C \times (0,\infty)^4 \rightarrow \mathbb{R}^4$, where $C := (\mathbb{R} \times (0,\infty))^2$. Now we consider
\begin{align}\label{eqn-as-implicit}
G(c,\vartheta) = 0, \quad \vartheta = (\alpha^+,\alpha^-,\lambda^+,\lambda^-) \in (0,\infty)^4
\end{align}
with the fixed vector $c = (c_1,\ldots,c_4)$ given by $c_n = \frac{\kappa_n}{(n-1)!}$ for $n = 1,\ldots,4$. Because of
\begin{align*}
\det \frac{\partial G}{\partial \vartheta}(c,\vartheta) &= \det
\left(
\begin{array}{cccc}
\lambda^- & -\lambda^+ & -\alpha^+ \lambda^- / \lambda^+ & \alpha^- \lambda^+ / \lambda^-
\\  (\lambda^-)^2 & (\lambda^+)^2 & -2\alpha^+ (\lambda^-)^2 / \lambda^+ & -2\alpha^- (\lambda^+)^2 / \lambda^-
\\  (\lambda^-)^3 & -(\lambda^+)^3 & -3\alpha^+ (\lambda^-)^3 / \lambda^+ & 3\alpha^- (\lambda^+)^3 / \lambda^-
\\  (\lambda^-)^4 & (\lambda^+)^4 & -4\alpha^+ (\lambda^-)^4 / \lambda^+ & -4\alpha^- (\lambda^+)^4 / \lambda^-
\end{array}
\right)
\\ &= \alpha^+ \alpha^- \lambda^+ \lambda^- \cdot \det \left(
\begin{array}{cccc}
1 & 1 & 1 & 1
\\ \lambda^- & (-\lambda^+) & 2 \lambda^- & 2(-\lambda^+)
\\ (\lambda^-)^2 & (-\lambda^+)^2 & 3 (\lambda^-)^2 & 3(-\lambda^+)^2
\\ (\lambda^-)^3 & (-\lambda^+)^3 & 4 (\lambda^-)^3 & 4 (-\lambda^+)^3
\end{array}
\right)
\\ &= \alpha^+ \alpha^- (\lambda^+)^2 (\lambda^-)^2 (\lambda^+ + \lambda^-)^4 > 0 
\end{align*}
for each $\vartheta \in (0, \infty)^4$, equation (\ref{eqn-as-implicit})
defines implicitely in a neighborhood $U$ of $c$ a uniquely defined function
$\vartheta = \vartheta (\gamma) $ with $G(\gamma, \vartheta (\gamma)) =
0$, $\gamma \in U$. Assuming the $\hat{c}_n$ calculated on the basis
of $\hat{m}_n$ are near the true $c_n$, we get a unique solution of
(\ref{system-for-method-of-m}).

This procedure yields a vector $\hat{\Theta}_0$ as first estimation for the parameters.   
Bilateral Gamma distributions are absolutely continuous with
respect to the Lebesgue measure, because they are the convolution of two Gamma distributions. In order to perform a \textit{maximum likelihood estimation}, we need adequate representations of their density functions.
Since the densities satisfy the symmetry relation
\begin{align}\label{symm-rel}
f(x;\alpha^+,\lambda^+,\alpha^-,\lambda^-) = f(-x;\alpha^-,\lambda^-,\alpha^+,\lambda^+), \quad x \in \mathbb{R} \setminus \{ 0 \}
\end{align}
it is sufficient to analyze the density functions on the positive real line.
As the convolution of two Gamma densities, they are for $x \in (0,\infty)$ given by
\begin{align}\label{density-bilateral-conv}
f(x) = \frac{(\lambda^+)^{\alpha^+}
(\lambda^-)^{\alpha^-}}{(\lambda^+ + \lambda^-)^{\alpha^-}
\Gamma(\alpha^+) \Gamma(\alpha^-)} e^{-\lambda^+ x} \int_0^\infty
v^{\alpha^- - 1} \left( x + \frac{v}{\lambda^+ + \lambda^-}
\right)^{\alpha^+ - 1} e^{-v} dv.
\end{align}
We can express the density $f$ by means of the \textit{Whittaker function} $W_{\lambda,\mu}(z)$
\cite[p. 1014]{Gradstein}, which is a well-studied mathematical function. According to \cite[p. 1015]{Gradstein}, the Whittaker function has the representation
\begin{align}\label{repr-whittaker}
W_{\lambda,\mu}(z) = \frac{z^{\lambda} e^{-\frac{z}{2}}}{\Gamma(\mu-\lambda+\frac{1}{2})}
\int_0^{\infty} t^{\mu - \lambda - \frac{1}{2}} e^{-t} 
\left( 1 + \frac{t}{z} \right)^{\mu + \lambda - \frac{1}{2}} dt
\quad \text{for $\mu - \lambda > -\frac{1}{2}$.}
\end{align}
From (\ref{density-bilateral-conv}) and (\ref{repr-whittaker}) we obtain for $x > 0$
\begin{align}\label{repr-dens-whittaker}
f(x) &= \frac{(\lambda^+)^{\alpha^+} (\lambda^-)^{\alpha^-}}{(\lambda^+ + \lambda^-)^{\frac{1}{2}(\alpha^+ + \alpha^-)}
\Gamma(\alpha^+)} x^{\frac{1}{2}(\alpha^+ + \alpha^-) - 1} 
e^{-\frac{x}{2}(\lambda^+ - \lambda^-)} 
\\ \notag & \quad \times W_{\frac{1}{2}(\alpha^+ - \alpha^-), \frac{1}{2}(\alpha^+ + \alpha^- - 1)}
(x(\lambda^+ + \lambda^-)).
\end{align}
The logarithm of the likelihood function for $\Theta = (\alpha^+,\alpha^-,\lambda^+,\lambda^-)$ is, by the symmetry relation (\ref{symm-rel}) and the representation (\ref{repr-dens-whittaker}) of the density, given by
\begin{align}\label{log-likelihood-fkt}
\ln L(\Theta) &= - n^+ \ln(\Gamma(\alpha^+)) - n^- \ln(\Gamma(\alpha^-))
\\ \notag & \quad + n \left( \alpha^+ \ln(\lambda^+) + \alpha^- \ln(\lambda^-) 
- \frac{\alpha^+ + \alpha^-}{2}  \ln(\lambda^+ + \lambda^-) \right)
\\ \notag & \quad + \left( \frac{\alpha^+ + \alpha^-}{2} - 1 \right)
\left( \sum_{i=1}^n \ln|x_i| \right) - \frac{\lambda^+ - \lambda^-}{2} \left( \sum_{i=1}^n x_i \right)
\\ \notag & \quad + \sum_{i=1}^n \ln \left( W_{\frac{1}{2}{\rm sgn}(x_i)(\alpha^+ - \alpha^-), 
\frac{1}{2}(\alpha^+ + \alpha^- - 1)}
(|x_i|(\lambda^+ + \lambda^-)) \right),
\end{align}
where $n^+$ denotes the number of positive, and $n^-$ the number of negative observations.
We take the vector $\hat{\Theta}_0$, obtained from the method of moments, as starting point for an algorithm, for example the Hooke-Jeeves algorithm \cite[Sec. 7.2.1]{Quarteroni}, which maximizes the logarithmic likelihood function (\ref{log-likelihood-fkt}) numerically.
This gives us a maximum likelihood estimation $\hat{\Theta}$ of the parameters. We shall illustrate the whole procedure in Section \ref{sec-illustration}.

\section{Bilateral Gamma processes}\label{sec-processes}

As we have shown in Section \ref{sec-distribution}, bilateral Gamma
distributions are infinitely divisible. Let us list the properties of the associated L{\'e}vy 
processes, which are denoted by $X$ in the sequel.

From the representation (\ref{levy-measure}) of the L\'evy measure $F$ we see that $F(\mathbb{R}) = \infty$ and $\int_{\mathbb{R}} |x| F(dx) < \infty$. Since the Gaussian part is zero, $X$ is of type B in the terminology of \cite[Def. 11.9]{Sato}. We obtain the following properties. Bilateral Gamma processes are \textit{finite-variation processes} \cite[Thm. 21.9]{Sato} making infinitely many jumps at each interval with positive length \cite[Thm. 21.3]{Sato}, and they are equal to the sum of their jumps \cite[Thm. 19.3]{Sato}, i.e.
\begin{align*}
X_t = \sum_{s \leq t} \Delta X_s = x * \mu^X, \quad t \geq 0
\end{align*}
where $\mu^X$ denotes the random measure of jumps of $X$. Bilateral Gamma processes are \textit{special semimartingales} with canonical decomposition \cite[Cor. II.2.38]{Jacod-Shiryaev} 
\begin{align*}
X_t = x * (\mu^X - \nu)_t + \left( \frac{\alpha^+}{\lambda^+} - \frac{\alpha^-}{\lambda^-} \right) t, \quad t \geq 0
\end{align*}
where $\nu$ is the compensator of $\mu^X$, which is given by $\nu(dt,dx) = dt F(dx)$ with $F$ denoting the L\'evy measure given by (\ref{levy-measure}).

We immediately see from the characteristic function (\ref{cf-bilateral}) that
all increments of $X$ have a bilateral Gamma distribution, more precisely
\begin{align}\label{gamma-inc}
X_t - X_s \sim \Gamma(\alpha^+(t-s),\lambda^+;\alpha^-(t-s),\lambda^-) \quad \text{for $0 \leq s < t$.}
\end{align}
There are many efficient algorithms for generating Gamma random variables, for example Johnk's generator and Best's generator of Gamma variables, chosen in \cite[Sec. 6.3]{Cont-Tankov}.
By virtue of (\ref{gamma-inc}), it is therefore easy to simulate bilateral Gamma processes.

\section{Measure transformations for bilateral Gamma processes}

Equivalent changes of measure are important in order to define
arbitrage-free financial models. In this section, we deal with
equivalent measure transformations for bilateral Gamma processes.

We assume that the probability space
$(\Omega,\mathcal{F},\mathbb{P})$ is given as follows. Let $\Omega
= \mathbb{D}$, the collection of functions $\omega(t)$ from $\mathbb{R}_+$ into $\mathbb{R}$, right-continuous with left limits. For $\omega \in \Omega$, let $X_t(\omega) = \omega(t)$ and let $\mathcal{F} = \sigma(X_t : t \in \mathbb{R}_+)$ and $(\mathcal{F}_t)_{t \geq 0}$ be the filtration
$\mathcal{F}_t = \sigma(X_s : s \in [0,t])$. We consider a probability measure $\mathbb{P}$ on $(\Omega, \mathcal{F})$ such that $X$ is a bilateral Gamma process.

\begin{proposition}\label{prop-measure-transform}
Let $X$ be a $\Gamma(\alpha_1^+, \lambda_1^+;
\alpha_1^-, \lambda_1^-)$-process under the measure $\mathbb{P}$ and let $\alpha_2^+, \lambda_2^+, \alpha_2^-, \lambda_2^- > 0$.
The following two statements are equivalent.
\begin{enumerate}
\item There is another measure $\mathbb{Q} \overset{{\rm
loc}}{\sim} \mathbb{P}$ under which $X$ is a bilateral Gamma
process with parameters $\alpha_2^+, \lambda_2^+, \alpha_2^-,
\lambda_2^-$.

\item $\alpha_1^+ = \alpha_2^+$ and $\alpha_1^- = \alpha_2^-$.
\end{enumerate}
\end{proposition}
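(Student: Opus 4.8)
The plan is to reduce the whole statement to the classical criterion for absolute continuity of two Lévy processes, Sato's theorem \cite[Thm. 33.1]{Sato}. On our canonical space the law of the coordinate process $X$ \emph{is} the underlying measure, so statement (1) says precisely that the path-space laws $\mathbb{P} = P^{(1)}$ and $\mathbb{Q} = P^{(2)}$ of the two bilateral Gamma processes are locally equivalent. Sato's criterion requires three conditions: equality of the Gaussian parts; equivalence $F_1 \sim F_2$ of the Lévy measures together with the integrability condition $\int_{\mathbb{R}} (e^{\phi(x)/2} - 1)^2 F_1(dx) < \infty$, where $\phi = \log(dF_2/dF_1)$; and a drift-matching condition $\gamma_2 - \gamma_1 = \int_{|x| \leq 1} x \, (F_2 - F_1)(dx)$ for the drifts with respect to the standard truncation. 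Here $F_1, F_2$ denote the two Lévy measures and $\gamma_1, \gamma_2$ the corresponding drifts. The first two steps dispose of the easy conditions and isolate the integrability condition as the only obstruction.

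First, both processes have vanishing Gaussian part, so that condition holds trivially. Second, recall from the computation following (\ref{levy-measure}) that a bilateral Gamma process has drift zero with respect to the truncation function $h = 0$; converting to the standard truncation $h(x) = x \mathbbm{1}_{\{ |x| \leq 1 \}}$ gives $\gamma_j = \int_{|x| \leq 1} x \, F_j(dx)$ for $j = 1,2$, so that the drift-matching condition $\gamma_2 - \gamma_1 = \int_{|x| \leq 1} x \, (F_2 - F_1)(dx)$ holds automatically, for every choice of parameters. Thus neither the $\lambda$'s nor the $\alpha$'s are constrained by the Gaussian or the drift conditions, and the entire content of the proposition resides in the integrability condition on the Lévy measures.

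The heart of the argument is therefore the analysis of the Hellinger-type integral. Since the densities of $F_1$ and $F_2$ in (\ref{levy-measure}) are strictly positive on each of $(0,\infty)$ and $(-\infty,0)$, we always have $F_1 \sim F_2$, and on the positive half-line
\begin{align*}
\phi(x) = \log \frac{dF_2}{dF_1}(x) = \log \frac{\alpha_2^+}{\alpha_1^+} - (\lambda_2^+ - \lambda_1^+) x, \quad x > 0,
\end{align*}
with the analogous expression on $(-\infty,0)$. The key point is that $F_1$ carries a non-integrable $1/x$ singularity at the origin. As $x \downarrow 0$ one has $e^{\phi(x)/2} \to \sqrt{\alpha_2^+/\alpha_1^+}$, so the integrand behaves like $(\sqrt{\alpha_2^+/\alpha_1^+} - 1)^2 \, \alpha_1^+/x$ near $0$; this is integrable near the origin if and only if the constant $\sqrt{\alpha_2^+/\alpha_1^+} - 1$ vanishes, i.e. $\alpha_1^+ = \alpha_2^+$. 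If on the other hand $\alpha_1^+ = \alpha_2^+$, then $e^{\phi(x)/2} - 1 = O(x)$ and the integrand is $O(x)$ near $0$. At infinity the factors $e^{-\lambda_j^+ x}$ make the integrand decay exponentially for any parameter values. Running the same analysis on $(-\infty,0)$ shows that the integrability condition holds if and only if $\alpha_1^+ = \alpha_2^+$ and $\alpha_1^- = \alpha_2^-$, which combined with the previous two steps yields the equivalence of (1) and (2).

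The delicate point, and the one I would be most careful about, is identifying exactly which parameters the integrability condition constrains; this is genuinely governed by the behaviour at the origin. Because the bilateral Gamma Lévy measure has infinite total mass with a $1/|x|$ density near $0$, the multiplicative constants $\alpha_2^\pm/\alpha_1^\pm$ are forced to equal $1$, whereas the exponential scale parameters $\lambda^\pm$ influence only the tails and hence remain completely free. I would also make sure to verify that the drift-matching condition is truly automatic, since that is precisely what legitimizes changing the $\lambda$'s, rather than an additional hidden constraint.
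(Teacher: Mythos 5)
Your proof is correct and follows essentially the same route as the paper: both reduce the statement to Sato's Theorem 33.1, observe that the Gaussian and drift conditions are automatic, and identify the Hellinger-type integral $\int_{\mathbb{R}}(1-\sqrt{dF_2/dF_1})^2\,dF_1$ as the only obstruction, which the $1/|x|$ singularity of the L\'evy measure at the origin forces to diverge unless $\alpha_1^\pm=\alpha_2^\pm$. The only difference is cosmetic: you argue by asymptotics of the integrand at $0$ and at $\infty$ (and spell out the drift-matching step the paper dismisses as obvious), whereas the paper rewrites the integral in the symmetric form $\int_0^\infty \frac{1}{x}(\sqrt{\alpha_2^\pm}e^{-\lambda_2^\pm x/2}-\sqrt{\alpha_1^\pm}e^{-\lambda_1^\pm x/2})^2\,dx$ and reads off the same conclusion.
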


\begin{proof}
All conditions of \cite[Thm. 33.1]{Sato} are obviously satisfied, with exception of
\begin{align}\label{int-crit-measure-trans}
\int_{\mathbb{R}} \left( 1 - \sqrt{\Phi(x)} \right)^2 F_1(dx) <
\infty,
\end{align}
where $\Phi = \frac{d F_2}{d F_1}$ denotes the Radon-Nikodym derivative of the respective L\'evy measures, which is by (\ref{levy-measure}) given by
\begin{align}\label{def-phi-trans}
\Phi(x) = \frac{\alpha_2^+}{\alpha_1^+} e^{-(\lambda_2^+ - \lambda_1^+)
x} \mathbf{1}_{(0,\infty)}(x) + \frac{\alpha_2^-}{\alpha_1^-} e^{-(\lambda_2^- - \lambda_1^-)
\abs{x}} \mathbf{1}_{(-\infty,0)}(x), \quad x \in \mathbb{R}.
\end{align}
The integral in (\ref{int-crit-measure-trans}) is equal to
\begin{align*}
\int_{\mathbb{R}} \left( 1 - \sqrt{\Phi(x)} \right)^2 F_1(dx) &=
\int_0^{\infty} \frac{1}{x} \left( \sqrt{\alpha_2^+}
e^{-(\lambda_2^+/2) x} - \sqrt{\alpha_1^+} e^{-(\lambda_1^+/2) x}
\right)^2 dx
\\ & \quad + \int_0^{\infty} \frac{1}{x} \left(
\sqrt{\alpha_2^-} e^{-(\lambda_2^-/2) x} - \sqrt{\alpha_1^-}
e^{-(\lambda_1^-/2) x} \right)^2 dx.
\end{align*}
Hence, condition (\ref{int-crit-measure-trans}) is satisfied if and only if $\alpha_1^+ = \alpha_2^+$ and $\alpha_1^- = \alpha_2^-$. Applying \cite[Thm. 33.1]{Sato} completes the proof.
\end{proof}

Proposition \ref{prop-measure-transform} implies that we can transform any Variance Gamma process, which is according to Theorem \ref{thm-variance-gamma} a bilateral Gamma process $\Gamma(\alpha,\lambda^+;\alpha,\lambda^-)$, into
a symmetric bilateral Gamma process $\Gamma(\alpha,\lambda;\alpha,\lambda)$ with arbitrary
parameter $\lambda > 0$. 

Now assume the process $X$ is
$\Gamma(\alpha^+,\lambda_1^+;\alpha^-,\lambda_1^-)$ under
$\mathbb{P}$ and
$\Gamma(\alpha^+,\lambda_2^+;\alpha^-,\lambda_2^-)$ under
the measure $\mathbb{Q} \overset{{\rm loc}}{\sim} \mathbb{P}$. According to Proposition \ref{prop-measure-transform}, such a change of measure exists.
For the computation of the \textit{likelihood process}
\begin{align*} 
\Lambda_t(\mathbb{Q},\mathbb{P}) =
\frac{d \mathbb{Q} |_{\mathcal{F}_t}}{d \mathbb{P}
|_{\mathcal{F}_t}}, \quad t \geq 0
\end{align*}
we will need the following auxiliary result Lemma \ref{lemma-integral-help}. For its proof, we require the following properties of the \textit{Exponential Integral} \cite[Chap. 5]{Abramowitz}
\begin{align*}
E_1(x) := \int_1^{\infty} \frac{e^{-xt}}{t} dt, \quad x > 0.
\end{align*}
The Exponential Integral has the series expansion
\begin{align}\label{series-ei}
E_1(x) = - \gamma - \ln x - \sum_{n=1}^{\infty} \frac{(-1)^n}{n \cdot n!} x^n,
\end{align}
where $\gamma$ denotes Euler's constant
\begin{align*}
\gamma = \lim_{n \rightarrow \infty} \left[ \sum_{k=1}^n
\frac{1}{k} - \ln(n) \right].
\end{align*}
The derivative of the Exponential Integral is given by
\begin{align}\label{der-exp-int}
\frac{\partial}{\partial x}E_1(x) = - \frac{e^{-x}}{x}.
\end{align}

\begin{lemma}\label{lemma-integral-help}
For all $\lambda_1, \lambda_2 > 0$ it holds
\begin{align*}
\int_0^{\infty} \frac{e^{-\lambda_2 x} - e^{-\lambda_1
x}}{x} dx = \ln \left( \frac{\lambda_1}{\lambda_2} \right).
\end{align*}
\end{lemma}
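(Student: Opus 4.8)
The plan is to reduce this Frullani-type integral to the Exponential Integral $E_1$ and then exploit its series expansion (\ref{series-ei}) to isolate the logarithm. First I would check that the integral is well defined: since $e^{-\lambda_2 x} - e^{-\lambda_1 x} = (\lambda_1 - \lambda_2) x + O(x^2)$ as $x \downarrow 0$, the integrand extends continuously to $x = 0$ (with value $\lambda_1 - \lambda_2$), and the exponential decay controls the behaviour as $x \to \infty$. Hence the improper integral converges and equals $\lim_{\varepsilon \downarrow 0} \int_\varepsilon^\infty$.

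Next, for each fixed $\varepsilon > 0$ the two pieces $\int_\varepsilon^\infty \frac{e^{-\lambda_i x}}{x}\,dx$ converge separately, so I may split the integral. The substitution $u = \lambda_i x$ turns each piece into $\int_{\lambda_i \varepsilon}^\infty \frac{e^{-u}}{u}\,du$. A further substitution $u = xt$ in the defining formula $E_1(x) = \int_1^\infty \frac{e^{-xt}}{t}\,dt$ shows $E_1(x) = \int_x^\infty \frac{e^{-u}}{u}\,du$, so each piece is exactly $E_1(\lambda_i \varepsilon)$, and therefore $\int_\varepsilon^\infty \frac{e^{-\lambda_2 x} - e^{-\lambda_1 x}}{x}\,dx = E_1(\lambda_2 \varepsilon) - E_1(\lambda_1 \varepsilon)$.

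Now I would insert the series expansion (\ref{series-ei}). The constants $-\gamma$ cancel, the logarithmic terms combine to $-\ln(\lambda_2 \varepsilon) + \ln(\lambda_1 \varepsilon) = \ln(\lambda_1/\lambda_2)$ — crucially the $\ln \varepsilon$ contributions cancel — and the leftover power series $\sum_{n \ge 1} \frac{(-1)^n}{n \cdot n!}\big[ (\lambda_1 \varepsilon)^n - (\lambda_2 \varepsilon)^n \big]$ tends to $0$ as $\varepsilon \downarrow 0$, the interchange of limit and summation being justified by absolute convergence of the series uniformly on the bounded set $\{ \lambda_i \varepsilon : 0 < \varepsilon \le 1 \}$. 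Passing to the limit $\varepsilon \downarrow 0$ then yields the asserted value $\ln(\lambda_1/\lambda_2)$.

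The only genuine obstacle is the bookkeeping around the individually divergent pieces: $\int_\varepsilon^\infty \frac{e^{-\lambda_i x}}{x}\,dx$ blows up like $-\ln(\lambda_i \varepsilon)$ as $\varepsilon \downarrow 0$, and the entire point is that these logarithmic singularities cancel while leaving behind $\ln(\lambda_1/\lambda_2)$; the expansion (\ref{series-ei}) is precisely the device that exhibits this cancellation cleanly, and the derivative (\ref{der-exp-int}) is consistent with the identity $E_1'(x) = -e^{-x}/x$ underlying the substitution. One could alternatively differentiate the integral with respect to $\lambda_2$ to get $-1/\lambda_2$ and integrate, using its vanishing at $\lambda_2 = \lambda_1$; but that route demands a separate justification of differentiation under the integral sign, so I prefer the $E_1$ argument the paper has already prepared.
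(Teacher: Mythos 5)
Your argument is correct and follows essentially the same route as the paper: both reduce the truncated integral to differences of the Exponential Integral $E_1$ (you via the substitution showing $\int_\varepsilon^\infty e^{-\lambda x}x^{-1}\,dx = E_1(\lambda\varepsilon)$, the paper via the antiderivative relation (\ref{der-exp-int})) and then extract $\ln(\lambda_1/\lambda_2)$ from the series expansion (\ref{series-ei}), with the remaining power-series terms vanishing as the cutoff tends to zero. Your treatment of the cancellation of the individually divergent logarithmic pieces is, if anything, slightly more careful than the paper's.
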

 
\begin{proof}
Due to relation (\ref{der-exp-int}) and the series expansion (\ref{series-ei}) of the
Exponential Integral $E_1$ we obtain
\begin{align*}
&\int_0^{\infty} \frac{e^{-\lambda_2 x} - e^{-\lambda_1
x}}{x} dx = \lim_{b \rightarrow \infty} \left[ E_1(\lambda_1 b) - E_1(\lambda_2 b) \right] - \lim_{a \rightarrow 0} \left[ E_1(\lambda_1 a) - E_1(\lambda_2 a) \right]
\\ &= \lim_{b \rightarrow \infty}
E_1(\lambda_1 b) - \lim_{b \rightarrow \infty} E_1(\lambda_2 b)
\\ &\quad + \ln \left( \frac{\lambda_1}{\lambda_2} \right) + \lim_{a \rightarrow
0} \sum_{n=1}^{\infty} \frac{1}{n \cdot n!} (\lambda_1
a)^n - \lim_{a \rightarrow 0} \sum_{n=1}^{\infty} \frac{1}{n
\cdot n!} (\lambda_2 a)^n.
\end{align*}
Each of the four limits is zero, so the claimed identity follows.
\end{proof}

For our applications to finance, the \textit{relative entropy}
$\mathcal{E}_t(\mathbb{Q},\mathbb{P}) = \mathbb{E}_{\mathbb{Q}}
[\ln \Lambda_t(\mathbb{Q},\mathbb{P})]$, also known as \textit{Kullback-Leibler distance}, which is often used as
measure of proximity of two equivalent probability measures, will be of importance. The upcoming result provides the likelihood process and the relative entropy.
In the degenerated cases $\lambda_1^+ = \lambda_2^+$ or
$\lambda_1^- = \lambda_2^-$, the associated Gamma distributions in (\ref{deg-gamma-delta}) are
understood to be the Dirac measure $\delta(0)$.

\begin{proposition}\label{density-repr}
It holds $\Lambda_t(\mathbb{Q},\mathbb{P}) = e^{U_t}$, where $U$
is under $\mathbb{P}$ the L$\acute{e}$vy process with generating distribution
\begin{align}\label{deg-gamma-delta}
U_1 \sim \Gamma \left( \alpha^+, \frac{\lambda_1^+}{\lambda_1^+ -
\lambda_2^+}\right) * \Gamma \left( \alpha^-,
\frac{\lambda_1^-}{\lambda_1^- - \lambda_2^-}\right) * \delta
\left( \alpha^+ \ln \left( \frac{\lambda_2^+}{\lambda_1^+}
\right) + \alpha^- \ln \left( \frac{\lambda_2^-}{\lambda_1^-}
\right) \right).
\end{align}
Setting $f(x) = x - 1 - \ln x$, it holds for the relative entropy
\begin{align}\label{rel-entropy}
\mathcal{E}_t(\mathbb{Q},\mathbb{P}) = t \left[ \alpha^+
f \left( \frac{\lambda_1^+}{\lambda_2^+} \right) + \alpha^-
f \left( \frac{\lambda_1^-}{\lambda_2^-} \right) \right].
\end{align}
\end{proposition}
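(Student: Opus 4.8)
The plan is to read off the likelihood process from the general theory of density processes for L\'evy processes. Since $X$ is a bilateral Gamma process under both $\mathbb{P}$ and $\mathbb{Q}$ with identical shape parameters $\alpha^+,\alpha^-$, Proposition \ref{prop-measure-transform} guarantees $\mathbb{Q} \overset{{\rm loc}}{\sim} \mathbb{P}$ and that the hypotheses of Sato's density formula \cite[Thm. 33.2]{Sato} hold: the Gaussian parts vanish and, with truncation $h=0$, the drifts of both laws are zero, so the drift-compatibility condition is automatic. The formula then produces $\Lambda_t = e^{U_t}$ with $U$ a $\mathbb{P}$-L\'evy process given by the compensated jump functional
\[
U_t = \sum_{s \le t} \ln \Phi(\Delta X_s) - t \int_{\mathbb{R}} (\Phi(x) - 1) F_1(dx),
\]
where $\Phi$ is the density (\ref{def-phi-trans}) and $F_1$ the L\'evy measure (\ref{levy-measure}) with parameters $\lambda_1^\pm$. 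I would record the sanity check $\mathbb{E}_{\mathbb{P}}[e^{U_t}]=1$, which follows from the L\'evy--Khintchine exponential formula.

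Next I would evaluate the compensating integral. On $(0,\infty)$ one has $\Phi(x)-1 = e^{-(\lambda_2^+-\lambda_1^+)x}-1$ and $F_1(dx)=\tfrac{\alpha^+}{x}e^{-\lambda_1^+x}\,dx$, so the positive part equals $\alpha^+\int_0^\infty \frac{e^{-\lambda_2^+x}-e^{-\lambda_1^+x}}{x}\,dx = \alpha^+\ln(\lambda_1^+/\lambda_2^+)$ by Lemma \ref{lemma-integral-help}; the negative part is analogous. Hence $\int_{\mathbb{R}}(\Phi-1)F_1 = \alpha^+\ln(\lambda_1^+/\lambda_2^+)+\alpha^-\ln(\lambda_1^-/\lambda_2^-) = -m$, where $m$ is the constant appearing in (\ref{deg-gamma-delta}), so that $U_t = \sum_{s\le t}\ln\Phi(\Delta X_s) + mt$: a pure-jump part plus the deterministic drift $mt$, which accounts for the Dirac factor $\delta(m)$.

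To identify the law of $U_1$ I would compute its cumulant generating function. Because $\ln\Phi(x)=(\lambda_1^+-\lambda_2^+)x$ on $(0,\infty)$ and $(\lambda_1^--\lambda_2^-)|x|$ on $(-\infty,0)$, the exponential formula gives $\ln\mathbb{E}_{\mathbb{P}}[\exp(z\sum_{s\le 1}\ln\Phi(\Delta X_s))] = \int_{\mathbb{R}}(\Phi(x)^z-1)F_1(dx)$, whose positive part is $\alpha^+\int_0^\infty \frac{e^{-(\lambda_1^+-z(\lambda_1^+-\lambda_2^+))x}-e^{-\lambda_1^+x}}{x}\,dx = \alpha^+\ln\frac{\mu^+}{\mu^+-z}$ with $\mu^+=\frac{\lambda_1^+}{\lambda_1^+-\lambda_2^+}$, once more by Lemma \ref{lemma-integral-help}. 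Adding the drift contribution $mz$ and the analogous negative part, the resulting expression is exactly the cumulant generating function attached via (\ref{cf-gamma}) to the convolution (\ref{deg-gamma-delta}), proving the first assertion. Equivalently one may argue pathwise: the positive and negative jumps of $X$ form independent $\Gamma(\alpha^\pm,\lambda_1^\pm)$ subordinators, and scaling by $\lambda_1^\pm-\lambda_2^\pm$ turns them into the two Gamma factors in the spirit of Lemma \ref{sums-of-bilateral}. The degenerate cases $\lambda_1^\pm=\lambda_2^\pm$ annihilate the corresponding jump contribution, matching the $\delta(0)$ convention.

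For the relative entropy I would use $\mathcal{E}_t(\mathbb{Q},\mathbb{P}) = \mathbb{E}_{\mathbb{Q}}[U_t]$. Under $\mathbb{Q}$ the compensator of $\mu^X$ is $dt\,F_2(dx)$, so the compensation formula gives $\mathbb{E}_{\mathbb{Q}}[\sum_{s\le t}\ln\Phi(\Delta X_s)] = t\int_{\mathbb{R}}\ln\Phi(x)F_2(dx)$; here the integrands are of the form $\mathrm{const}\cdot e^{-\lambda_2^\pm|x|}$, so only elementary integrals appear and one obtains $\alpha^+(\frac{\lambda_1^+}{\lambda_2^+}-1)+\alpha^-(\frac{\lambda_1^-}{\lambda_2^-}-1)$. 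Adding $m$ and collecting terms reproduces $\alpha^+ f(\lambda_1^+/\lambda_2^+)+\alpha^- f(\lambda_1^-/\lambda_2^-)$ with $f(x)=x-1-\ln x$, which is (\ref{rel-entropy}). The main obstacle is the very first step, namely pinning down the exact form of $U_t$ from \cite{Sato}: one must verify the drift-compatibility condition and confirm that the compensator term is precisely $-t\int_{\mathbb{R}}(\Phi-1)F_1$, with no surviving truncation correction, which is legitimate because $X$ has finite variation and $\int_{\mathbb{R}}|x|F_1(dx)<\infty$. Once $U_t$ is in hand, everything reduces to the two elementary integrals supplied by Lemma \ref{lemma-integral-help}.
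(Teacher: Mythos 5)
Your proposal is correct and follows essentially the same route as the paper: both invoke Sato's Theorem 33.2 to obtain $U_t = \sum_{s\le t}\ln\Phi(\Delta X_s) - t\int_{\mathbb{R}}(\Phi-1)\,dF_1$ and evaluate the compensating integral with Lemma \ref{lemma-integral-help}. The only (inessential) variation is that you identify the law of $U_1$ and compute $\mathbb{E}_{\mathbb{Q}}[U_t]$ via the exponential and compensation formulas for the jump measure, whereas the paper writes $\sum_{s\le t}\ln\Phi(\Delta X_s) = (\lambda_1^+-\lambda_2^+)X_t^+ + (\lambda_1^--\lambda_2^-)X_t^-$ for the independent Gamma subordinators $X^{\pm}$ and then applies the scaling Lemma \ref{sums-of-bilateral} and the known $\mathbb{Q}$-moments of $X^{\pm}$ — the pathwise alternative you yourself mention.
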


\begin{proof}
According to \cite[Thm. 33.2]{Sato}, the likelihood process is of the form  $\Lambda_t(\mathbb{Q},\mathbb{P}) = e^{U_t}$, where $U$
is, under the measure $\mathbb{P}$, the L\'evy process
\begin{align} \label{proc-Ut}
U_t = \sum_{s \leq t} \ln (\Phi(\Delta X_s)) - t \int_{\mathbb{R}}
(\Phi(x) - 1) F_1(dx),
\end{align}
and where $\Phi$ is the Radon-Nikodym derivative given by (\ref{def-phi-trans}) with $\alpha_1^+ = \alpha_2^+ =: \alpha^+$ and $\alpha_1^- = \alpha_2^- =: \alpha^-$. For every $t > 0$ denote by $X_t^+$ the sum $\sum_{s \leq t}(\Delta X_s)^+$ and by $X_t^-$ the sum $\sum_{s \leq t}(\Delta X_s)^-$. Then $X = X^+ - X^-$. By construction and the definition of $\mathbb{Q}$, the processes $X^+$ and $X^-$ are independent $\Gamma(\alpha^+,\lambda_1^+)$- and $\Gamma(\alpha^-, \lambda_1^-)$-processes under $\mathbb{P}$ and independent $\Gamma(\alpha^+,\lambda_2^+)$- and $\Gamma(\alpha^-, \lambda_2^-)$-processes under $\mathbb{Q}$, respectively. From (\ref{def-phi-trans}) it follows
\begin{align*}
\sum_{s \leq t} \ln (\Phi(\Delta X_s)) = (\lambda_1^+ -
\lambda_2^+) X_t^+ + (\lambda_1^- - \lambda_2^-) X_t^-.
\end{align*}
The integral in (\ref{proc-Ut}) is, by using Lemma \ref{lemma-integral-help}, equal to
\begin{align*}
\int_{\mathbb{R}} (\Phi(x) - 1) F_1(dx) &= \alpha^+ \int_0^{\infty} \frac{e^{-\lambda_2^+ x} - e^{-\lambda_1^+
x}}{x} dx + \alpha^- \int_0^{\infty} \frac{e^{-\lambda_2^- x} - e^{-\lambda_1^- x}}{x} dx
\\ &= \alpha^+ \ln \left( \frac{\lambda_1^+}{\lambda_2^+} \right)
+ \alpha^- \ln \left( \frac{\lambda_1^-}{\lambda_2^-} \right).
\end{align*}
Hence, we obtain
\begin{align}\label{ln-likelihood-proc}
U_t = (\lambda_1^+ - \lambda_2^+)
X_t^+ + (\lambda_1^- - \lambda_2^-) X_t^- + \left[ \alpha^+ \ln
\left( \frac{\lambda_2^+}{\lambda_1^+} \right) + \alpha^- \ln
\left( \frac{\lambda_2^-}{\lambda_1^-} \right) \right] t.
\end{align}
Equation (\ref{ln-likelihood-proc}) yields (\ref{rel-entropy}) and, together with Lemma \ref{sums-of-bilateral}, the relation (\ref{deg-gamma-delta}).
\end{proof}

Since the likelihood process is of the form $\Lambda_t(\mathbb{Q},\mathbb{P}) = e^{U_t}$, where the L\'evy process $U$ is given by (\ref{ln-likelihood-proc}), one verifies that
\begin{align}\label{exp-family}
\Lambda_t(\mathbb{Q},\mathbb{P}) &= \exp \Big( (\lambda_1^+ - \lambda_2^+)X_t^+ - t \Psi^+(\lambda_1^+ - \lambda_2^+) \Big)
\\ \notag & \quad \times \exp \Big( (\lambda_1^- - \lambda_2^-)X_t^- - t \Psi^-(\lambda_1^- - \lambda_2^-) \Big),
\end{align}
where $\Psi^+, \Psi^-$ denote the respective cumulant generating functions of the Gamma processes $X^+, X^-$ under the measure $\mathbb{P}$.

Keeping $\alpha^+, \alpha^-, \lambda_1^+, \lambda_1^-$ all positive and fixed, then by putting $\vartheta^+ = \lambda_1^+ - \lambda_2^+$, $\vartheta^- = \lambda_1^- - \lambda_2^-$, $\vartheta = (\vartheta^+, \vartheta^-)^{\top} \in (-\infty, \lambda_1^+) \times (-\infty, \lambda_1^-) =: \Theta$, $\mathbb{Q} = \mathbb{Q}_{\vartheta}$ we obtain a two-parameter \textit{exponential family} $(\mathbb{Q}_{\vartheta}, \vartheta \in \Theta)$ of L\'evy processes in the sense of \cite[Chap. 3]{Kuechler-Soerensen}, with the canonical process $B_t = (X_t^+, X_t^-)$.

In particular, it follows that for every $t > 0$ the vector $B_t$ is a sufficient statistics for $\vartheta = (\vartheta^+, \vartheta^-)^{\top}$ based on the observation of $(X_s, s \leq t)$. Considering the subfamily obtained by $\vartheta^+ = \vartheta^-$, we obtain a one-parametric exponential family of L\'evy processes with $X_t^+ + X_t^- = \sum_{s \leq t} |\Delta X_s|$ as sufficient statistics and canonical process.

\section{Inspecting a typical path}

Proposition \ref{prop-measure-transform} of the previous section
suggests that the parameters $\alpha^+, \alpha^-$ should be
determinable by inspecting a typical path of a bilateral Gamma process. This is indeed the case. We start with
Gamma processes. Let $X$ be a $\Gamma(\alpha,\lambda)$-process.
Choose a finite time horizon $T > 0$ and set 
\begin{align*}
S_n := \frac{1}{nT} \# \left\{ t \leq T : \Delta X_t \geq e^{-n} \right\}, \quad n \in
\mathbb{N}.
\end{align*}

\begin{theorem}\label{thm-det-alpha}
It holds $\mathbb{P} \left( \lim_{n \rightarrow \infty} S_n = \alpha \right) = 1$.
\end{theorem}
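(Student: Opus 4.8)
The plan is to exploit the fact that the jumps of the Gamma process $X$ form a Poisson random measure $\mu^X$ on $[0,T]\times(0,\infty)$ with intensity $dt\,F(dx)$, where by the discussion around (\ref{levy-measure}) the L\'evy measure of a $\Gamma(\alpha,\lambda)$-subordinator is $F(dx)=\frac{\alpha}{x}e^{-\lambda x}\mathbf{1}_{(0,\infty)}(x)\,dx$. Writing $N_n:=nT\,S_n=\#\{t\le T:\Delta X_t\ge e^{-n}\}$, each $N_n$ is thus a Poisson random variable, and the crucial structural observation is that the \emph{increments} $\Delta N_k:=N_k-N_{k-1}$ count the jumps whose heights fall in the disjoint bands $[e^{-k},e^{-(k-1)})$; since $\mu^X$ assigns independent values to disjoint sets, the family $(\Delta N_k)_{k\ge1}$ is independent, with $\Delta N_k\sim\mathrm{Poisson}(\nu_k)$ and $\nu_k=T\,F\big([e^{-k},e^{-(k-1)})\big)$. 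This reduces the claim to a strong law of large numbers for the independent sequence $(\Delta N_k)$, since $N_n=N_0+\sum_{k=1}^n\Delta N_k$ and $N_0/(nT)\to0$.

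First I would pin down the asymptotics of the band intensities. Substituting $u=\lambda x$ gives $F([e^{-n},\infty))=\alpha\,E_1(\lambda e^{-n})$ with $E_1$ the Exponential Integral, so that $\nu_k=T\alpha\big(E_1(\lambda e^{-k})-E_1(\lambda e^{-(k-1)})\big)$. Either from the series expansion (\ref{series-ei}), which yields $E_1(\lambda e^{-n})=n-\ln\lambda-\gamma+o(1)$, or more directly from the elementary two-sided estimate $e^{-\lambda e^{-(k-1)}}\le\int_{e^{-k}}^{e^{-(k-1)}}\frac{e^{-\lambda x}}{x}\,dx\le e^{-\lambda e^{-k}}$ together with $\int_{e^{-k}}^{e^{-(k-1)}}\frac{dx}{x}=1$, one obtains $\nu_k\to T\alpha$ as $k\to\infty$; in particular the $\nu_k$ are bounded. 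This already gives $\mathbb{E}[S_n]=\alpha\,E_1(\lambda e^{-n})/n\to\alpha$.

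Finally I would upgrade convergence in mean to almost sure convergence via Kolmogorov's criterion: the $\Delta N_k$ are independent with $\mathrm{Var}(\Delta N_k)=\nu_k$ bounded, so $\sum_k\mathrm{Var}(\Delta N_k)/k^2<\infty$, whence $\frac1n\sum_{k=1}^n(\Delta N_k-\nu_k)\to0$ almost surely. Combining this with the Ces\`aro limit $\frac1n\sum_{k=1}^n\nu_k\to T\alpha$ (valid because $\nu_k\to T\alpha$) gives $\frac1n\sum_{k=1}^n\Delta N_k\to T\alpha$ a.s., and dividing by $T$ yields $S_n\to\alpha$ almost surely. The main obstacle is not any single estimate but the passage from $L^2$- (or in-probability) convergence---which follows immediately from $\mathbb{E}[S_n]\to\alpha$ together with $\mathrm{Var}(S_n)=\mathbb{E}[N_n]/(nT)^2\sim\alpha/(nT)\to0$---to the almost sure statement; the device that makes this work is precisely the decomposition of the nested counts $N_n$ into independent Poisson increments, after which Kolmogorov's strong law applies cleanly.
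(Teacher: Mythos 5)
Your proof is correct and follows essentially the same route as the paper: the same decomposition of the jump count into independent Poisson counts over the dyadic-in-$\log$ bands $[e^{-k},e^{1-k})$ (your $\Delta N_k$ is exactly $T$ times the paper's $Y_k$), the same computation showing the band intensities tend to $T\alpha$, and the same application of Kolmogorov's strong law plus a Ces\`aro argument, with the residual count of jumps $\geq 1$ handled identically. No substantive differences.
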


\begin{proof}
Due to \cite[Thm. 19.2]{Sato}, the random measure $\mu^X$ of the
jumps of $X$ is a Poisson random measure with intensity measure
\begin{align*}
\nu(dt,dx) = dt \frac{\alpha e^{-\lambda x}}{x}
\mathbf{1}_{(0,\infty)} dx.
\end{align*} 
Thus, the sequence
\begin{align*}
Y_n := \frac{1}{T} \mu^X
\left( [0,T] \times [e^{-n},e^{1-n}) \right), \quad n \in \mathbb{N}
\end{align*}
defines a sequence of independent random variables with
\begin{align*}
\mathbb{E}[Y_n] &= \alpha \int_{e^{-n}}^{e^{1-n}}
\frac{e^{-\lambda x}}{x} dx = \alpha \int_{n-1}^n \exp \left(
-\lambda e^{-v} \right) dv \uparrow \alpha \quad \text{as $n
\rightarrow \infty$,}
\\ {\rm Var}[Y_n] &= \frac{\alpha}{T} \int_{e^{-n}}^{e^{1-n}}
\frac{e^{-\lambda x}}{x} dx = \frac{\alpha}{T} \int_{n-1}^n \exp
\left( -\lambda e^{-v} \right) dv \uparrow \frac{\alpha}{T} \quad
\text{as $n \rightarrow \infty$,}
\end{align*}
because $\exp \left( -\lambda e^{-v} \right) \uparrow 1$ for $v
\rightarrow \infty$. Hence, we have
\begin{align*} 
 \sum_{n=1}^{\infty} \frac{{\rm
Var}[Y_n]}{n^2} < \infty.
\end{align*} 
We may therefore apply Kolmogorov's
strong law of large numbers \cite[Thm. IV.3.2]{Shiryaev}, and
deduce that
\begin{align*}
\lim_{n \rightarrow \infty} S_n = \lim_{n
\rightarrow \infty} \frac{1}{n} \sum_{k=1}^n Y_k +
\lim_{n \rightarrow \infty} \frac{1}{nT}
\mu^X([0,T] \times [1,\infty)) = \alpha,
\end{align*}
finishing the proof.
\end{proof}

Now, let $X$ be a bilateral Gamma process, say $X_1 \sim \Gamma(\alpha^+,\lambda^+;\alpha^-,\lambda^-)$. 
We set 
\begin{align*}
S_n^+ &:= \frac{1}{nT} \# \left\{ t \leq T : \Delta X_t \geq e^{-n} \right\}, \quad n \in \mathbb{N},
\\ S_n^- &:= \frac{1}{nT} \# \left\{ t \leq T : \Delta X_t \leq -e^{-n} \right\}, \quad n \in \mathbb{N}.
\end{align*}

\begin{corollary}
It holds $\mathbb{P}(\lim_{n \rightarrow \infty} S_n^+ = \alpha^+ \text{ and } \lim_{n \rightarrow \infty} S_n^- = \alpha^-) = 1$.
\end{corollary}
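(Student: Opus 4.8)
The plan is to reduce this corollary to two applications of Theorem \ref{thm-det-alpha}. First I would split $X$ into the sums of its positive and negative jumps, writing $X = X^+ - X^-$ with $X_t^+ := \sum_{s \leq t} (\Delta X_s)^+$ and $X_t^- := \sum_{s \leq t} (\Delta X_s)^-$, exactly as in the proof of Proposition \ref{density-repr}. As recorded there, $X^+$ is a $\Gamma(\alpha^+,\lambda^+)$-process and $X^-$ is a $\Gamma(\alpha^-,\lambda^-)$-process.

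The decisive observation is that $\Delta X_t^+ = (\Delta X_t)^+$ and $\Delta X_t^- = (\Delta X_t)^-$ for every $t$, so that (because $e^{-n} > 0$) one has the set identities
\begin{align*}
\{ t \leq T : \Delta X_t \geq e^{-n} \} &= \{ t \leq T : \Delta X_t^+ \geq e^{-n} \},
\\ \{ t \leq T : \Delta X_t \leq -e^{-n} \} &= \{ t \leq T : \Delta X_t^- \geq e^{-n} \}.
\end{align*}
Hence $S_n^+$ is precisely the statistic $S_n$ of Theorem \ref{thm-det-alpha} built from the Gamma process $X^+$, and $S_n^-$ is the corresponding statistic built from $X^-$.

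Applying Theorem \ref{thm-det-alpha} once to $X^+$ and once to $X^-$ then gives $\mathbb{P}(\lim_{n \to \infty} S_n^+ = \alpha^+) = 1$ and $\mathbb{P}(\lim_{n \to \infty} S_n^- = \alpha^-) = 1$. Since the intersection of two events of full probability again has full probability, the assertion follows.

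I expect no serious obstacle here; the only point deserving a line of justification is the set identity above, namely that every positive jump of $X$ is a jump of $X^+$ and every negative jump of $X$ is a jump of $X^-$. This is immediate from the definition of $X^+$ and $X^-$ as the positive and negative jump parts, and is underpinned by the fact that the independent Gamma subordinators $X^+$ and $X^-$ almost surely never jump simultaneously, so that no cancellation can occur.
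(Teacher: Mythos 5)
Your proposal is correct and follows essentially the same route as the paper: decompose $X = X^+ - X^-$ into its positive and negative jump parts, identify these as independent $\Gamma(\alpha^+,\lambda^+)$- and $\Gamma(\alpha^-,\lambda^-)$-processes, and apply Theorem \ref{thm-det-alpha} to each. The extra care you take with the set identities and the intersection of two full-probability events is fine but just makes explicit what the paper leaves implicit.
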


\begin{proof}
We define the processes $X^+$ and $X^-$ as $X_t^+ = \sum_{s \leq t}(\Delta X_s)^+$ and $X_t^- = \sum_{s \leq t}(\Delta X_s)^-$. By construction we have $X = X^+ - X^-$ and the processes $X^+$ and $X^-$ are independent $\Gamma(\alpha^+,\lambda^+)$- and $\Gamma(\alpha^-, \lambda^-)$-processes. Applying Theorem \ref{thm-det-alpha} yields the desired result.
\end{proof}

\section{Stock models}\label{sec-stock}

We move on to present some applications to finance of the theory developed above.
Assume that the evolution of an asset price is described by an exponential L\'evy model $S_t = S_0 e^{rt + X_t}$, where $S_0 > 0$ is the (deterministic) initial value of the stock, $r$ the interest rate and where the L\'evy process $X$ is a bilateral Gamma process $\Gamma(\alpha^+,\lambda^+;\alpha^-,\lambda^-)$ under the measure $\mathbb{P}$, which plays the role of the real-world measure.

In order to avoid arbitrage, it arises the question
whether there exists an \textit{equivalent martingale measure}, i.e. a measure $\mathbb{Q} \overset{{\rm
loc}}{\sim} \mathbb{P}$ such that $Y_t := e^{-rt}S_t$ is a local martingale.

\begin{lemma}\label{measure-trans-gamma}
Assume $\lambda^+ > 1$. Then $Y$ is a local $\mathbb{P}$-martingale if and only if
\begin{align}\label{mart-measure-cond}
\left( \frac{\lambda^+}{\lambda^+ - 1} \right)^{\alpha^+} = \left( \frac{\lambda^- + 1}{\lambda^-} \right)^{\alpha^-}.
\end{align}
\end{lemma}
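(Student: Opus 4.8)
The plan is to characterize when $Y_t = e^{-rt}S_t = S_0 e^{X_t}$ is a local $\mathbb{P}$-martingale by exploiting the exponential-moment structure of the bilateral Gamma process. First I would recall from Section \ref{sec-distribution} that the cumulant generating function $\Psi(z) = \ln \mathbb{E}[e^{zX_1}]$ exists precisely on $(-\lambda^-, \lambda^+)$, with the explicit form given in (\ref{cumulant-gamma}). Since $Y_t = S_0 e^{X_t}$ and $X$ is a L\'evy process with stationary independent increments, the process $e^{X_t}$ is a martingale if and only if $\mathbb{E}[e^{X_1}] = 1$, i.e. $\Psi(1) = 0$, provided the relevant exponential moment is finite. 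The assumption $\lambda^+ > 1$ is exactly what guarantees that $z = 1$ lies in the open interval $(-\lambda^-, \lambda^+)$ (recall $\lambda^- > 0$), so that $\Psi(1)$ is well-defined and finite.

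The key computation is then to evaluate $\Psi(1)$ using (\ref{cumulant-gamma}):
\begin{align*}
\Psi(1) = \alpha^+ \ln \left( \frac{\lambda^+}{\lambda^+ - 1} \right) + \alpha^- \ln \left( \frac{\lambda^-}{\lambda^- + 1} \right).
\end{align*}
Setting $\Psi(1) = 0$ and exponentiating yields
\begin{align*}
\left( \frac{\lambda^+}{\lambda^+ - 1} \right)^{\alpha^+} \left( \frac{\lambda^-}{\lambda^- + 1} \right)^{\alpha^-} = 1,
\end{align*}
which rearranges directly into the claimed condition (\ref{mart-measure-cond}). Thus the martingale condition is equivalent to the stated equation, and the proof reduces to plugging $z=1$ into the already-derived cumulant generating function.

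The main technical point I would need to address carefully is the reduction of the local martingale property to the single moment condition $\mathbb{E}[e^{X_1}]=1$. For an exponential of a L\'evy process, $\mathbb{E}[e^{X_t}] = e^{t\Psi(1)}$ whenever $\Psi(1)$ is finite, and since $X$ has independent stationary increments, $e^{X_t}/\mathbb{E}[e^{X_t}]$ is a true martingale; when $\Psi(1)=0$ the normalizing factor is $1$, so $Y$ is itself a martingale (in particular a local martingale). Conversely, if $Y$ is a local martingale, the finiteness of the exponential moment (guaranteed by $\lambda^+ > 1$) together with the multiplicative structure forces $\mathbb{E}[e^{X_t}]$ to be constant in $t$, hence $e^{t\Psi(1)} \equiv 1$ and $\Psi(1)=0$. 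I expect the main obstacle to be handling this local-versus-true martingale equivalence rigorously; the cleanest route is to invoke that $e^{X_t - t\Psi(1)}$ is a positive martingale with expectation $1$, so $Y$ is a local martingale exactly when $\Psi(1)=0$. The remaining algebra in passing from $\Psi(1)=0$ to (\ref{mart-measure-cond}) is routine.
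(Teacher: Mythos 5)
Your route is genuinely different from the paper's, and the computational core is correct. The paper applies It\^o's formula to $Y_t = S_0 e^{X_t}$, writes $Y$ as a local martingale plus the predictable drift term $\int_0^t Y_{s-}\,ds \cdot \int_{\mathbb{R}}(e^x-1)F(dx)$, and evaluates the latter integral via Lemma \ref{lemma-integral-help}; the local martingale property is then equivalent to the vanishing of $\int_{\mathbb{R}}(e^x-1)F(dx)$. Your observation that this integral equals $\Psi(1)$ (because drift and Gaussian part vanish in the representation (\ref{char-fkt-self}), so $\Psi(z)=\int_{\mathbb{R}}(e^{zx}-1)F(dx)$ on $(-\lambda^-,\lambda^+)$) lets you bypass Lemma \ref{lemma-integral-help} and read the condition straight off (\ref{cumulant-gamma}). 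What this buys is a shorter computation; what it costs is that you must justify the equivalence ``$Y$ is a local martingale $\iff \Psi(1)=0$'' by other means, and that is where your argument has a genuine soft spot.

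The forward direction is fine. In the converse, however, you assert that if $Y$ is a local martingale then ``the multiplicative structure forces $\mathbb{E}[e^{X_t}]$ to be constant in $t$.'' A local martingale need not have constant expectation (strict local martingales are the standard counterexample), so as written this is an assertion, not a proof. It can be repaired: since $M_t := e^{X_t - t\Psi(1)}$ is a true martingale and $Y_t = S_0 e^{t\Psi(1)} M_t$, on each compact interval $[0,T]$ the values $Y_\tau$ over stopping times $\tau\le T$ are dominated by a constant multiple of the uniformly integrable family $\{\mathbb{E}[M_T\,|\,\mathcal{F}_\tau]\}$, so $Y$ is of class (DL), hence a true martingale, hence $e^{t\Psi(1)}=\mathbb{E}[Y_t]/S_0$ is constant and $\Psi(1)=0$. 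Alternatively, fall back on the paper's decomposition: the predictable finite-variation part $\int_0^t Y_{s-}\Psi(1)\,ds$ of the special semimartingale $Y$ must vanish, and $Y_{s-}>0$ forces $\Psi(1)=0$. Either patch closes the gap; without one of them the ``only if'' direction is incomplete.
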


\begin{proof}
Since the Gaussian part of the bilateral Gamma process $X$ is zero,
It\^o's formula \cite[Thm. I.4.57]{Jacod-Shiryaev}, applied on $Y_t = S_0 e^{X_t}$, yields for the discounted stock prices
\begin{align*}
Y_t = Y_0 + \int_0^t Y_{s-} dX_s + S_0 \sum_{0 < s \leq t} \Big(
e^{X_s} - e^{X_{s-}} - e^{X_{s-}} \Delta X_s \Big).
\end{align*}
Recall from Section \ref{sec-processes} that $X = x * \mu^X$ and that the compensator $\nu$ of $\mu^X$ is given by $\nu(dt,dx) = dt F(dx)$, where $F$ denotes the L\'evy measure from (\ref{levy-measure}). So we obtain
\begin{align}\label{repr-disc-stock}
Y_t &= Y_0 + \int_0^t \int_{\mathbb{R}} x Y_{s-} \mu^X(ds,dx) +
\int_0^t \int_{\mathbb{R}} Y_{s-} ( e^x - 1 - x ) \mu^X(ds,dx)
\\ \notag &= Y_0 + \int_0^t \int_{\mathbb{R}} Y_{s-} ( e^x - 1 ) (\mu^X - \nu)(ds,dx)
+ \int_0^t Y_{s-} \int_{\mathbb{R}} ( e^x - 1 ) F(dx)ds.
\end{align}
Applying Lemma \ref{lemma-integral-help}, the integral in the drift term is equal to
\begin{align}\label{drift-term-emm}
&\int_{\mathbb{R}} ( e^x - 1 ) F(dx)
\\ \notag &= \alpha^+ \int_0^{\infty} \frac{e^{-(\lambda^+ - 1) x} - e^{-\lambda^+
x}}{x} dx - \alpha^- \int_0^{\infty} \frac{e^{-\lambda^- x} - e^{-(\lambda^- + 1) x}}{x} dx
\\ \notag &= \alpha^+ \ln \left( \frac{\lambda^+}{\lambda^+ - 1} \right) -
\alpha^- \ln \left( \frac{\lambda^- + 1}{\lambda^-} \right).
\end{align}
The discounted price process $Y$ is a local martingale if and only if the drift in (\ref{repr-disc-stock}) vanishes, and by (\ref{drift-term-emm}) this is the case if and only if (\ref{mart-measure-cond}) is satisfied.
\end{proof}

As usual in financial modelling with jump processes, the market is free of arbitrage, but not complete, that is
there exist several martingale measures. The next result shows that we can find a continuum of martingale
measures by staying within the class of bilateral Gamma processes. We define $\phi : (1,\infty) \rightarrow \mathbb{R}$ as
\begin{align*}
\phi(\lambda) := \phi(\lambda;\alpha^+,\alpha^-) := \left( \left( \frac{\lambda}{\lambda - 1}
\right)^{\alpha^+ / \alpha^-} - 1 \right)^{-1}, \quad \lambda \in (1,\infty).
\end{align*}

\begin{proposition}\label{prop-emm}
For each $\lambda \in (1,\infty)$ there exists a martingale measure $\mathbb{Q}_{\lambda} \overset{{\rm loc}}{\sim} \mathbb{P}$ such that under $\mathbb{Q}_{\lambda}$ we have
\begin{align}\label{law-of-x-under-q}
X_1 \sim \Gamma(\alpha^+,\lambda;\alpha^-,\phi(\lambda)).
\end{align}
\end{proposition}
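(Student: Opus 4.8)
The plan is to combine the two preceding results: Proposition~\ref{prop-measure-transform}, which tells us precisely which equivalent changes of measure are admissible within the class of bilateral Gamma processes (namely those preserving the shape parameters $\alpha^+, \alpha^-$), and Lemma~\ref{measure-trans-gamma}, which characterises when the discounted price is a local martingale. The crucial point is that the function $\phi(\lambda)$ has been defined exactly so that the martingale condition (\ref{mart-measure-cond}) holds for the transformed parameters.

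First I would fix $\lambda \in (1,\infty)$ and propose the target parameter vector $(\alpha^+, \lambda; \alpha^-, \phi(\lambda))$, leaving the shape parameters $\alpha^+, \alpha^-$ unchanged. Before invoking any measure change, one must check that $\phi(\lambda) > 0$, so that this is a legitimate bilateral Gamma parameter vector. Since $\lambda > 1$ gives $\frac{\lambda}{\lambda - 1} > 1$, and hence $\big( \frac{\lambda}{\lambda - 1} \big)^{\alpha^+ / \alpha^-} > 1$, the quantity inside the inverse in the definition of $\phi$ is strictly positive, so indeed $\phi(\lambda) > 0$.

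Next, because the shape parameters are preserved, Proposition~\ref{prop-measure-transform} guarantees the existence of a measure $\mathbb{Q}_{\lambda} \overset{{\rm loc}}{\sim} \mathbb{P}$ under which $X$ is a bilateral Gamma process with the target parameters, that is, $X_1 \sim \Gamma(\alpha^+, \lambda; \alpha^-, \phi(\lambda))$ under $\mathbb{Q}_{\lambda}$. It then remains to verify that $\mathbb{Q}_{\lambda}$ is a martingale measure, i.e. that $Y_t = e^{-rt} S_t$ is a local $\mathbb{Q}_{\lambda}$-martingale. Applying Lemma~\ref{measure-trans-gamma} \emph{under} $\mathbb{Q}_{\lambda}$ (which is legitimate since the first positivity parameter $\lambda$ satisfies $\lambda > 1$), this reduces to checking
\[
\left( \frac{\lambda}{\lambda - 1} \right)^{\alpha^+} = \left( \frac{\phi(\lambda) + 1}{\phi(\lambda)} \right)^{\alpha^-}.
\]

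The only computation is this last identity, and it is immediate from the construction of $\phi$. Writing $p = \phi(\lambda)$, the definition gives $\frac{1}{p} = \big( \frac{\lambda}{\lambda - 1} \big)^{\alpha^+ / \alpha^-} - 1$, so that $\frac{p + 1}{p} = \big( \frac{\lambda}{\lambda - 1} \big)^{\alpha^+ / \alpha^-}$; raising both sides to the power $\alpha^-$ yields exactly the required condition. There is no genuine obstacle here: the difficulty, such as it is, lay entirely in guessing the correct functional form for $\phi$, after which the verification is a one-line inversion. I would therefore expect the write-up to be short, the only points requiring care being the positivity check on $\phi(\lambda)$ and the explicit observation that Lemma~\ref{measure-trans-gamma} is being applied under the transformed measure $\mathbb{Q}_{\lambda}$ rather than under $\mathbb{P}$.
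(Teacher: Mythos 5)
Your proposal is correct and follows exactly the same route as the paper: invoke Proposition~\ref{prop-measure-transform} (admissible since the shape parameters $\alpha^+,\alpha^-$ are preserved) to obtain $\mathbb{Q}_{\lambda}$, then check the martingale condition (\ref{mart-measure-cond}) of Lemma~\ref{measure-trans-gamma} for the new parameters, which holds by the very construction of $\phi$. The paper's own proof is just a terser version of this; your added checks (positivity of $\phi(\lambda)$ and the remark that the lemma is applied under $\mathbb{Q}_{\lambda}$) are correct and make the argument more complete.
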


\begin{proof}
Recall that $X$ is $\Gamma(\alpha^+,\lambda^+;\alpha^-,\lambda^-)$ under $\mathbb{P}$. According to Proposition \ref{prop-measure-transform}, for each $\lambda \in (1,\infty)$ there exists a probability measure $\mathbb{Q}_{\lambda} \overset{{\rm loc}}{\sim} \mathbb{P}$ such that under $\mathbb{Q}_{\lambda}$ relation (\ref{law-of-x-under-q}) is fulfilled, and moreover this measure $\mathbb{Q}_{\lambda}$ is a martingale measure, because equation (\ref{mart-measure-cond}) from Lemma \ref{measure-trans-gamma} is satisfied.
\end{proof}

So, we have a continuum of martingale measures, and the question is, which one we should choose. There
are several suggestions in the literature, see, e.g., \cite{Cont-Tankov}. 

One approach is to minimize the relative entropy, which amounts
to finding $\lambda \in (1,\infty)$ which minimizes
$\mathcal{E}(\mathbb{Q}_{\lambda},\mathbb{P})$, and then taking $\mathbb{Q}_{\lambda}$.
The relative entropy is determined in equation (\ref{rel-entropy}) of Proposition \ref{density-repr}. Taking the first derivative with respect to $\lambda$, and setting it equal
to zero, we have to find the $\lambda \in (1,\infty)$ such that
\begin{align}\label{min-entropy}
\alpha^- \alpha \lambda^{\alpha - 1}
\left( \frac{1}{\lambda^{\alpha}(\lambda - 1) - (\lambda - 1)^{\alpha + 1}} 
- \frac{\lambda^-}{(\lambda - 1)^{\alpha + 1}} \right) 
+ \frac{\alpha^+}{\lambda} \left( 1 - \frac{\lambda^+}{\lambda} \right) = 0,
\end{align}
where $\alpha := \alpha^+ / \alpha^-$. This can be done numerically. 

Another point of view is that the martingale measure is given by the market. We would like to \textit{calibrate} the L\'evy process $X$ from the family of bilateral Gamma processes to option prices. According to Proposition \ref{prop-emm} we can, by adjusting $\lambda \in (1,\infty)$, preserve the martingale property, which leaves us one parameter to calibrate.

For simplicity, we set $r=0$. For each $\lambda \in (1,\infty)$, an arbitrage free pricing rule for a \textit{European Call Option}
at time $t \geq 0$ is, provided that $S_t = s$, given by
\begin{align}\label{call-option-price}
C_{\lambda}(s,K;t,T) = \mathbb{E}_{\mathbb{Q}_{\lambda}}[(S_T - K)^+ | S_t=s], 
\end{align}
where $K$ denotes the strike price and $T > t$ the time of maturity. We can express the expectation in (\ref{call-option-price}) as
\begin{align}
\mathbb{E}_{\mathbb{Q}_{\lambda}}[(S_T - K)^+ | S_t=s] = \Pi(s,K,\alpha^+(T-t),\alpha^-(T-t),\lambda,\phi(\lambda)),
\end{align}
where $\Pi$ is defined as
\begin{align}\label{preis-integral}
\Pi(s,K,\alpha^+,\alpha^-,\lambda^+,\lambda^-) := \int_{\ln \left( \frac{K}{s} \right)}^{\infty} (se^x - K) f(x;\alpha^+,\alpha^-,\lambda^+,\lambda^-)dx,
\end{align}
with $x \mapsto f(x;\alpha^+,\alpha^-,\lambda^+,\lambda^-)$ denoting the density of a bilateral Gamma distribution having these parameters.
In order to compute the option prices, we have to evaluate the integral in (\ref{preis-integral}).
In the sequel, $F(\alpha,\beta;\gamma;z)$ denotes the \textit{hypergeometric series} \cite[p. 995]{Gradstein}
\begin{align*}
F(\alpha,\beta;\gamma;z) &= 1 + \frac{\alpha \cdot \beta}{\gamma \cdot 1} z
+ \frac{\alpha(\alpha+1)\beta(\beta+1)}{\gamma(\gamma+1) \cdot 1 \cdot 2} z^2
\\ & \quad + \frac{\alpha(\alpha+1)(\alpha+2)\beta(\beta+1)(\beta+2)}{\gamma(\gamma+1)(\gamma+2) \cdot 1 \cdot 2 \cdot 3} z^3 + \ldots
\end{align*}

\begin{proposition}\label{prop-preis-integral}
Assume $\lambda^+ > 1$. For the integral in (\ref{preis-integral}) the following identity is valid:
\begin{align}\label{pricing-formula-half-num}
&\Pi(s,K,\alpha^+,\alpha^-,\lambda^+,\lambda^-) = \int_{\ln \left( \frac{K}{s} \right)}^0 (se^x - K) f(x;\alpha^+,\alpha^-,\lambda^+,\lambda^-)dx
\\ \notag &\quad + \frac{(\lambda^+)^{\alpha^+}(\lambda^-)^{\alpha^-}\Gamma(\alpha^+ + \alpha^-)}{\Gamma(\alpha^+) \Gamma(\alpha^- + 1)}
\\ \notag &\quad \times \left( \frac{s F(\alpha^+ + \alpha^-, \alpha^-; \alpha^- + 1; - \frac{\lambda^- + 1}{\lambda^+ - 1})}{(\lambda^+ - 1)^{\alpha^+ + \alpha^-}} - \frac{K F(\alpha^+ + \alpha^-, \alpha^-; \alpha^- + 1; - \frac{\lambda^-}{\lambda^+})}{(\lambda^+)^{\alpha^+ + \alpha^-}} \right).
\end{align}
\end{proposition}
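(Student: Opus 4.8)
The plan is to break the range of integration in (\ref{preis-integral}) at the point $0$, writing
\begin{align*}
\Pi = \int_{\ln(K/s)}^0 (se^x - K)f(x)\,dx + \int_0^{\infty}(se^x - K)f(x)\,dx,
\end{align*}
an identity valid for either sign of $\ln(K/s)$ once the integrals are read with orientation. The first summand is precisely the term kept in (\ref{pricing-formula-half-num}), so everything reduces to evaluating the tail $\int_0^\infty(se^x-K)f(x)\,dx$ in closed form. Since on $(0,\infty)$ the density $f$ is given explicitly by (\ref{density-bilateral-conv}), I would write this tail as $sI_1 - KI_0$, where $I_\mu := \int_0^{\infty} e^{\mu x} f(x)\,dx$ for $\mu \in \{0,1\}$, and aim to show that each $I_\mu$ collapses to a single Gauss hypergeometric term.

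For fixed $\mu$ I would insert (\ref{density-bilateral-conv}), absorb the factor $e^{\mu x}$ into the exponential as $e^{-\beta x}$ with $\beta := \lambda^+ - \mu$, and substitute $v = (\lambda^+ + \lambda^-)w$ to clear the constant, obtaining
\begin{align*}
I_\mu = \frac{(\lambda^+)^{\alpha^+}(\lambda^-)^{\alpha^-}}{\Gamma(\alpha^+)\Gamma(\alpha^-)} \int_0^{\infty}\!\!\int_0^{\infty} e^{-\beta x}\, w^{\alpha^- - 1} e^{-(\lambda^+ + \lambda^-)w}\, (x + w)^{\alpha^+ - 1}\,dw\,dx.
\end{align*}
The integrand is nonnegative, so Tonelli legitimizes any change of variables. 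The decisive step is the simplex-type substitution $r = x+w$, $t = w/(x+w)$, whose Jacobian is $r$ and under which the exponent telescopes to $-r(\beta + \delta t)$ with $\delta := \lambda^- + \mu$; the cross terms cancel because $\beta + \delta = \lambda^+ + \lambda^-$. Performing the elementary Gamma integral over $r\in(0,\infty)$ then leaves the one-dimensional integral $\int_0^1 t^{\alpha^- - 1}(\beta + \delta t)^{-(\alpha^+ + \alpha^-)}\,dt$, which (since $\alpha^-+1 > \alpha^- > 0$) is exactly Euler's integral representation of $F(\alpha^+ + \alpha^-, \alpha^-; \alpha^- + 1; -\delta/\beta)$ up to the factors $\tfrac{1}{\alpha^-}\beta^{-(\alpha^+ + \alpha^-)}$. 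Using $\alpha^-\Gamma(\alpha^-)=\Gamma(\alpha^-+1)$ this yields
\begin{align*}
I_\mu = \frac{(\lambda^+)^{\alpha^+}(\lambda^-)^{\alpha^-}\Gamma(\alpha^+ + \alpha^-)}{\Gamma(\alpha^+)\Gamma(\alpha^- + 1)}\cdot\frac{F\!\left(\alpha^+ + \alpha^-,\, \alpha^-;\, \alpha^- + 1;\, -\frac{\lambda^- + \mu}{\lambda^+ - \mu}\right)}{(\lambda^+ - \mu)^{\alpha^+ + \alpha^-}}.
\end{align*}
Specializing to $\mu=1$ and $\mu=0$ and forming $sI_1 - KI_0$ then reproduces the bracketed expression in (\ref{pricing-formula-half-num}) together with its common prefactor. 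The hypothesis $\lambda^+ > 1$ enters exactly here: it guarantees $\beta = \lambda^+ - 1 > 0$ so that $I_1$ converges, whereas $I_0$ requires only $\lambda^+ > 0$.

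I expect the main obstacle to be the central double-integral evaluation, namely discovering the change of variables that decouples $x$ and $w$ and recognizing the resulting integral as Euler's form of $F$ with the precise parameters $(\alpha^+ + \alpha^-, \alpha^-; \alpha^- + 1)$; once that is in place, tracking the constants $(\lambda^+)^{\alpha^+}(\lambda^-)^{\alpha^-}$ and the Gamma factors is routine bookkeeping. An alternative route computes the inner $x$-integral as an upper incomplete Gamma function and then invokes a tabulated Laplace-type integral, but this produces $F$ with leading parameter $1$ and argument $\delta/(\lambda^+ + \lambda^-)$, so it would require an extra Pfaff transformation to reach the stated form; the simplex substitution above is the cleaner path and is what I would carry out.
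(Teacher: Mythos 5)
Your proof is correct, but it follows a genuinely different route from the paper's. The paper's proof is a two-line table lookup: it invokes the Whittaker-function representation (\ref{repr-dens-whittaker}) of the density and then applies the tabulated Laplace transform of the Whittaker function (identity 3 on p.~816 of Gradshteyn--Ryzhik, i.e.\ $\int_0^\infty e^{-pt}t^{\nu-1}W_{\kappa,\mu}(at)\,dt$ expressed via $F$), which directly produces the hypergeometric terms with parameters $(\alpha^++\alpha^-,\alpha^-;\alpha^-+1)$. You instead start from the raw convolution form (\ref{density-bilateral-conv}), justify the interchange by Tonelli, decouple the double integral with the simplex substitution $r=x+w$, $t=w/(x+w)$, and recognize Euler's integral representation of $F$ --- all of which I have checked: the Jacobian, the telescoping of the exponent to $-r(\beta+\delta t)$ via $\beta+\delta=\lambda^++\lambda^-$, the Gamma integral in $r$, and the bookkeeping $\alpha^-\Gamma(\alpha^-)=\Gamma(\alpha^-+1)$ all come out right, and $\lambda^+>1$ is used exactly where you say. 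What your approach buys is self-containedness (no reliance on a special-function table) and, as a genuine bonus, a correct reading of the formula when the arguments $-\tfrac{\lambda^-+1}{\lambda^+-1}$ or $-\tfrac{\lambda^-}{\lambda^+}$ fall outside the unit disk: there the hypergeometric \emph{series} as written in the paper diverges, and your Euler integral $\int_0^1 t^{\alpha^--1}(1-zt)^{-(\alpha^++\alpha^-)}\,dt$ is precisely the analytic continuation that makes the stated identity meaningful. What the paper's route buys is brevity and a direct link to the density representation (\ref{repr-dens-whittaker}) already established in Section \ref{sec-statistics-distributions}.
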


\begin{proof}
Note that the density of a bilateral Gamma distribution is given by (\ref{repr-dens-whittaker}).
The assertion follows by applying identity 3 from \cite[p. 816]{Gradstein}.
\end{proof}

Proposition \ref{prop-preis-integral} provides a closed pricing formula for exp-L\'evy models with underlying bilateral Gamma process, as the \textit{Black-Scholes formula} for Black-Scholes models. In formula (\ref{pricing-formula-half-num}), it remains to evaluate the integral over the compact interval $[\ln (\frac{K}{s}),0]$. This can be done
numerically. In the special case $K = s$ we get an exact pricing formula.

\begin{corollary}\label{cor-preis-integral}
Assume $\lambda^+ > 1$. In the case $K = s$ it holds for (\ref{preis-integral}):
\begin{align}\label{pricing-formula}
&\Pi(s,K,\alpha^+,\alpha^-,\lambda^+,\lambda^-)
= \frac{K(\lambda^+)^{\alpha^+}(\lambda^-)^{\alpha^-}\Gamma(\alpha^+ + \alpha^-)}{\Gamma(\alpha^+) \Gamma(\alpha^- + 1)} 
\\ \notag &\quad \times \left( \frac{F(\alpha^+ + \alpha^-, \alpha^-; \alpha^- + 1; - \frac{\lambda^- + 1}{\lambda^+ - 1})}{(\lambda^+ - 1)^{\alpha^+ + \alpha^-}} - \frac{F(\alpha^+ + \alpha^-, \alpha^-; \alpha^- + 1; - \frac{\lambda^-}{\lambda^+})}{(\lambda^+)^{\alpha^+ + \alpha^-}} \right).
\end{align}
\end{corollary}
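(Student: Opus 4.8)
The plan is to obtain the corollary as an immediate specialization of Proposition~\ref{prop-preis-integral} to the at-the-money case $K = s$. The one structural feature to exploit is that the only term in formula~(\ref{pricing-formula-half-num}) that is not already in closed hypergeometric form, namely the integral $\int_{\ln(K/s)}^0 (se^x - K)\,f(x)\,dx$ over the compact interval $[\ln(K/s),0]$, is integrated over a range whose length is governed entirely by $\ln(K/s)$.

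First I would invoke Proposition~\ref{prop-preis-integral}, whose hypothesis $\lambda^+ > 1$ is exactly the standing assumption of the corollary, to express $\Pi(s,K,\alpha^+,\alpha^-,\lambda^+,\lambda^-)$ as the sum of that integral and the bracketed hypergeometric expression in~(\ref{pricing-formula-half-num}). Then I would set $K = s$. Since $\ln(K/s) = \ln 1 = 0$, the lower and upper limits of the integral coincide, so the integral term vanishes identically and only the hypergeometric part survives.

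It remains to rewrite the surviving bracket. With $K = s$ the coefficient $s$ multiplying the first series and the coefficient $K$ multiplying the second both equal the common value $K$; pulling this common factor out of the bracket and combining it with the Gamma-function prefactor produces precisely the right-hand side of~(\ref{pricing-formula}). There is no genuine obstacle in the argument---the entire content lies in Proposition~\ref{prop-preis-integral}, and the corollary is a one-line evaluation of it at the degenerate integration interval---so the only point worth flagging is that the vanishing of the integral must be read off from the coincidence of its limits rather than from any cancellation within the integrand.
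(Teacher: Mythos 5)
Your argument is correct and is exactly the paper's proof: the corollary is obtained by specializing Proposition~\ref{prop-preis-integral} to $K=s$, where the integral over $[\ln(K/s),0]$ degenerates to an integral over a single point and vanishes, leaving only the hypergeometric terms with the common factor $K$ pulled out. The paper states this as an immediate consequence without further detail, so your write-up just makes the same one-line observation explicit.
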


\begin{proof}
This is an immediate consequence of Proposition \ref{prop-preis-integral}.
\end{proof}

We will use this result in the upcoming section in
order to calibrate our model to an option price observed at the market.

\section{An illustration: DAX 1996-1998}\label{sec-illustration}

\begin{figure}[t]
   \centering
   \includegraphics[height=30ex,width=50ex]{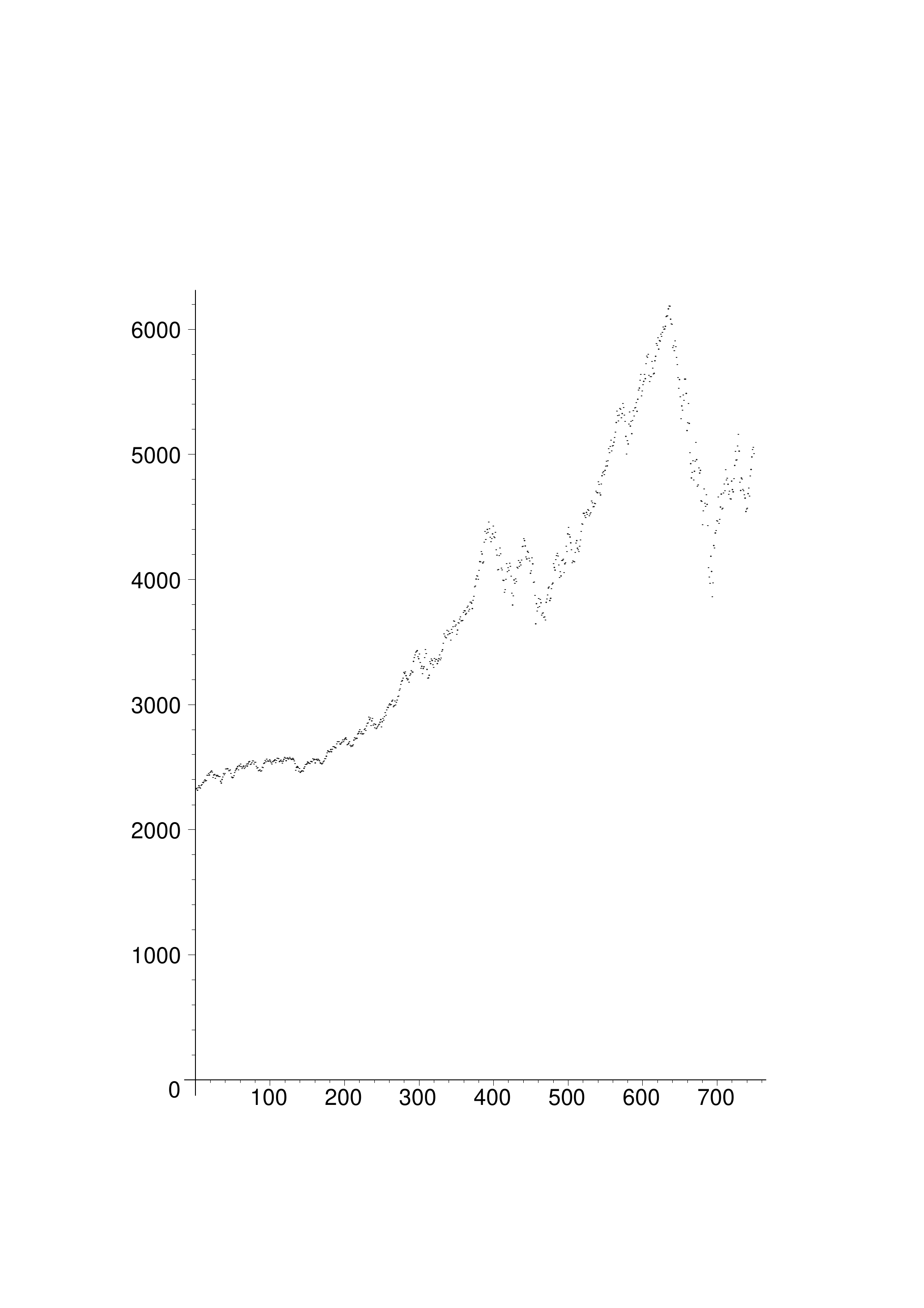}
   \caption{DAX, 1996-1998.}
   \label{fig-dax-real}
\end{figure}

We turn to an illustration of the preceding theory. Figure \ref{fig-dax-real} shows 751
observations $S_0, S_1, \ldots, S_{750}$ of the German stock index DAX, over the
period of three years. We assume that this price evolution
actually is the trajectory of an exponential bilateral Gamma
model, i.e. $S_t = S_0 e^{X_t}$ with $S_0 = 2307.7$ and $X$ being
a $\Gamma(\Theta)$-process, where $\Theta = (\alpha^+,\alpha^-,\lambda^+,\lambda^-)$. 
For simplicity we assume that the interest rate $r$ is zero.
Then the increments $\Delta X_i = X_i - X_{i-1}$ for $i=1,\ldots,750$
are a realization of an i.i.d. sequence of
$\Gamma(\Theta)$-distributed random variables. 

In order to estimate $\Theta$, we carry out the statistical program described in Section
\ref{sec-statistics-distributions}. For the given observations $\Delta X_1, \ldots, \Delta X_{750}$, the \textit{method of moments} (\ref{est-moments}) yields the estimation
\begin{align*}
\hat{m}_1 &= 0.001032666257,
\\ \hat{m}_2 &= 0.0002100280033,
\\ \hat{m}_3 &= -0.0000008191504362,
\\ \hat{m}_4 &= 0.0000002735163873.
\end{align*}
We can solve the system of equations (\ref{system-for-method-of-m}) explicitly and obtain, apart from the trivial cases $(\alpha^+,\lambda^+) = (0,0)$, $(\alpha^-,\lambda^-) = (0,0)$ and $(\lambda^+,\lambda^-) = (0,0)$, the two solutions $(1.28, 0.78, 119.75, 80.82)$ and $(0.78 ,1.28, -80.82, -119.75)$. Taking into account the parameter condition $\alpha^+,\alpha^-,\lambda^+,\lambda^- > 0$, the system (\ref{system-for-method-of-m}) has the unique solution
\begin{align*}
\hat{\Theta}_0 = (1.28, 0.78, 119.75, 80.82).
\end{align*}
Proceeding with the Hooke-Jeeves algorithm \cite[Sec. 7.2.1]{Quarteroni}, which maximizes the logarithmic likelihood function (\ref{log-likelihood-fkt}) numerically, with $\hat{\Theta}_0$ as starting point, we obtain
the \textit{maximum likelihood estimation}
\begin{align*}
\hat{\Theta} = (1.55, 0.94, 133.96, 88.92).
\end{align*}
We have estimated the parameters of the bilateral Gamma process $X$ under the measure $\mathbb{P}$, which plays the role of the real-world measure. The next task is to find an appropriate martingale measure $\mathbb{Q}_{\lambda} \overset{{\rm loc}}{\sim} \mathbb{P}$. 
 
Assume that at some point of time $t \geq 0$ the stock has value $S_t = 5000$ EUR, and that there is a European Call Option at the market with the same strike price $K = 5000$ EUR
and exercise time in $100$ days, i.e. $T = t + 100$. Our goal is to \textit{calibrate} our model to the price of this option. Since the stock value and the strike price coincide, we can use the exact pricing formula (\ref{pricing-formula}) from Corollary \ref{cor-preis-integral}. The resulting Figure \ref{fig-lambda-prices}
shows the Call Option prices $C_{\lambda}(5000,5000;t,t+100)$ for $\lambda \in (1,\infty)$.
Observe that we get the whole interval $(0,5000)$ of reasonable Call Option prices. This is a typical feature of exp-L\'evy models, cf. \cite{Eberlein-Jacod}.
 
Consequently, we can calibrate our model to any observed price $C \in (0,5000)$ of the Call Option by choosing the $\lambda \in (1,\infty)$ such that $C = C_{\lambda}(5000,5000;t,t+100)$. 

\begin{figure}[t]
   \centering
   \includegraphics[height=30ex,width=50ex]{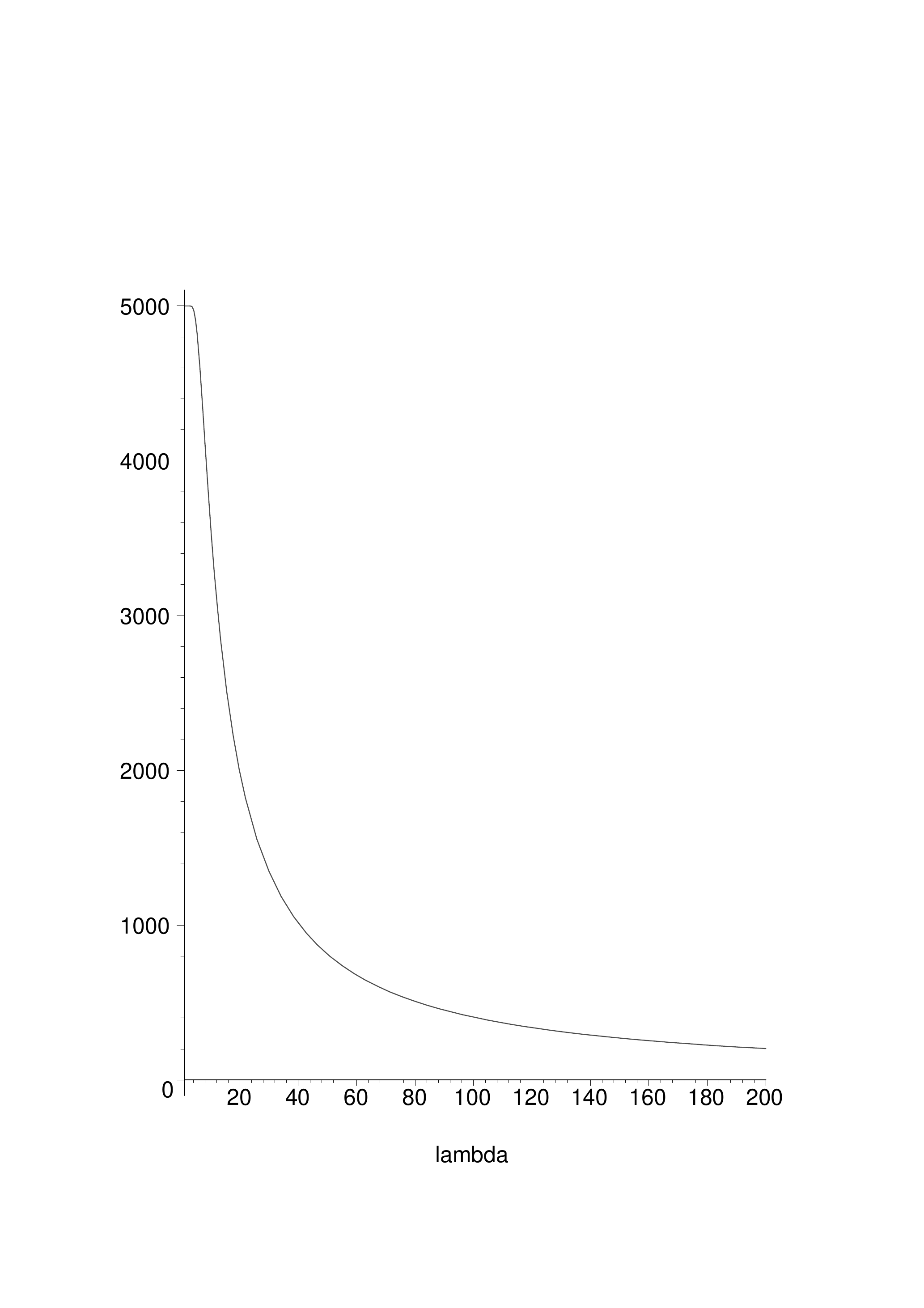}
   \caption{Call Option prices $C_{\lambda}(5000,5000;t,t+100)$ for $\lambda \in (1,\infty)$.}
   \label{fig-lambda-prices}
\end{figure}

As described in Section \ref{sec-stock}, another way to find a martingale measure is to minimize the relative entropy, i.e. finding $\lambda \in (1,\infty)$ which minimizes $\mathcal{E}(\mathbb{Q}_{\lambda},\mathbb{P})$. For this purpose, we have to find $\lambda \in (1,\infty)$ such that (\ref{min-entropy}) is satisfied. We solve this equation numerically and find the unique solution given by $\lambda = 139.47$. Using the corresponding martingale measure $\mathbb{Q}_{\lambda} \overset{{\rm loc}}{\sim} \mathbb{P}$, we obtain the Call Option price $C_{\lambda}(5000,5000;t,t+100) = 290.75$, cf. Figure \ref{fig-lambda-prices}. Under $\mathbb{Q}_{\lambda}$, the process $X$ is, according to Proposition \ref{prop-emm}, a bilateral Gamma process $\Gamma(1.55,139.47;0.94,83.51)$. 

It remains to analyze the goodness of fit of the bilateral Gamma distribution, and to compare it to other
families of distributions. Figure \ref{fig-emp-bil-dens} shows the empirical and the fitted bilateral Gamma density.

We have provided maximum likelihood estimations for generalized hyperbolic (GH), Normal inverse Gaussian (NIG), i.e. GH with $\lambda = - \frac{1}{2}$,
hyperbolic (HYP), i.e. GH with $\lambda = 1$, bilateral Gamma, Variance Gamma (VG) and Normal distributions. In the following table
we see the Kolmogorov-distances ($L^{\infty}$), the $L^1$-distances and the $L^2$-distances between the empirical and the estimated distribution functions. The number in brackets denotes the number of parameters of the respective distribution family. Despite its practical relevance, we have omitted the class of CGMY distributions, because their probability densities are not available in closed form.

\begin{center}
\begin{tabular}{|l||c|c|c|}\hline
 & Kolmogorov-distance & $L^1$-distance & $L^2$-distance\\ \hline\hline
GH (5) & 0.0134 & 0.0003 & 0.0012\\ \hline
NIG (4) & 0.0161 & 0.0004 & 0.0013\\ \hline
HYP (4) & 0.0137 & 0.0004 & 0.0013\\ \hline
Bilateral (4) & 0.0160 & 0.0003 & 0.0013\\ \hline
VG (3) & 0.0497 & 0.0011 & 0.0044\\ \hline
Normal (2) & 0.0685 & 0.0021 & 0.0091\\ \hline
\end{tabular}
\end{center}  

We remark that the fit provided by bilateral Gamma distributions is of the same quality as that of NIG and HYP, the four-parameter subclasses of generalized hyperbolic distributions.

We perform the \textit{Kolmogorov test} by using the following table which shows the quantiles $\lambda_{1-\alpha}$ of order $1 - \alpha$ of the Kolmogorov distribution divided by the square root of the number $n$ of observations. Recall that in our example we have $n = 750$.

\begin{center}
\begin{tabular}{|c||c|c|c|c|c|}\hline
$\alpha$ & 0.20 & 0.10 & 0.05 & 0.02 & 0.01\\ \hline
$\lambda_{1-\alpha} / \sqrt{n}$ & 0.039 & 0.045 & 0.050 & 0.055 & 0.059\\ \hline
\end{tabular}
\end{center} 

Taking the Kolmogorov-distances from the previous table and comparing them with the values $\lambda_{1-\alpha} / \sqrt{n}$ of this box, we see that the hypothesis of a Normal distribution can clearly be
denied, Variance Gamma distribution can be denied with probability of error $5$ percent, whereas the remaining families of distributions cannot be rejected.

\begin{figure}[t]
   \centering
   \includegraphics[height=30ex,width=50ex]{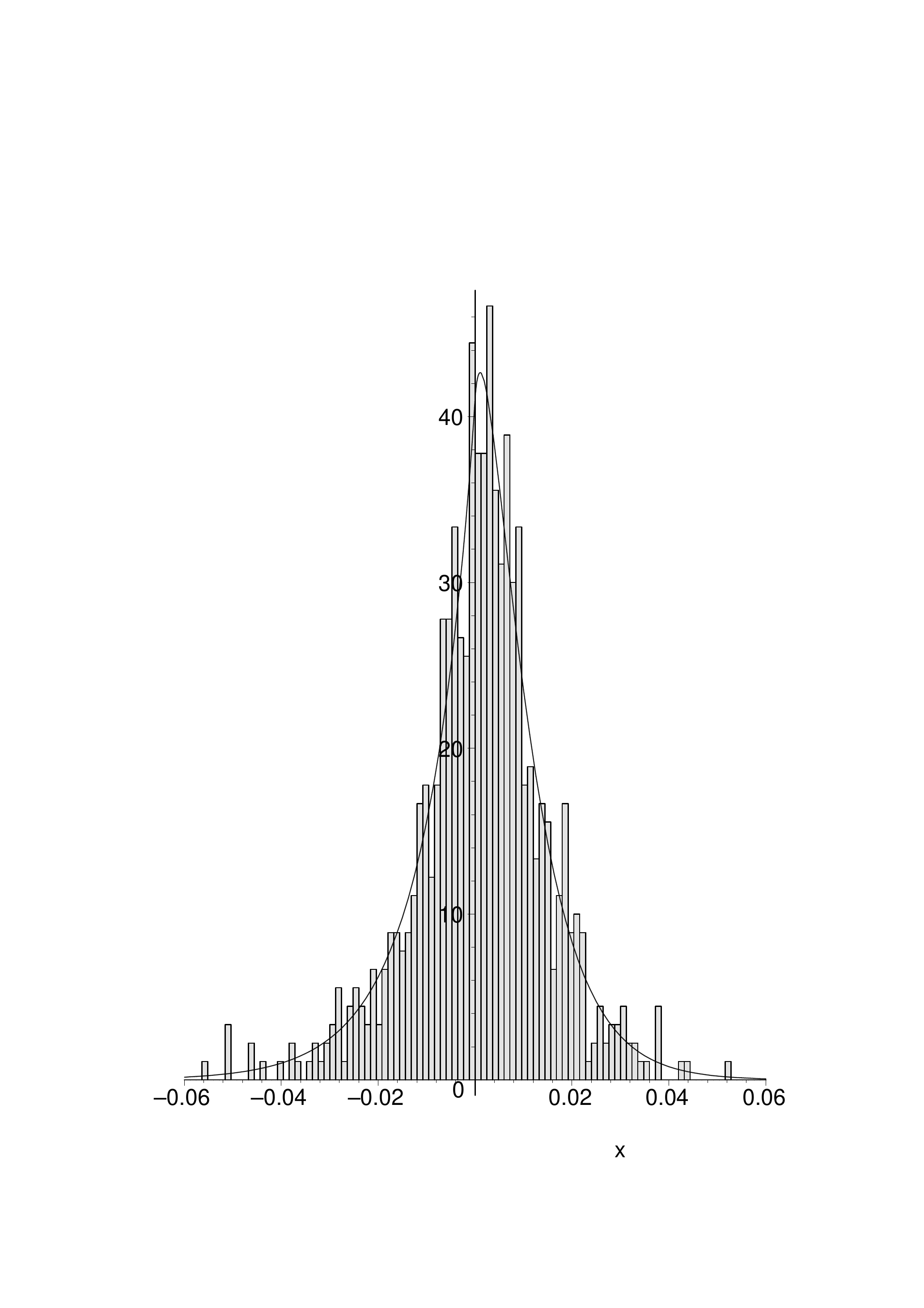}
   \caption{Empirical density and fitted bilateral Gamma density.}
   \label{fig-emp-bil-dens}
\end{figure}

\section{Term structure models}

Let $f(t,T)$ be a Heath-Jarrow-Morton term structure model (\cite{HJM})
\begin{align*}
df(t,T) = \alpha(t,T)dt + \sigma(t,T)dX_t,
\end{align*}
driven by a one-dimensional L${\rm \acute{e}}$vy process $X$. We
assume that the cumulant generating function $\Psi$ exists on some
non-void closed interval $I \subset \mathbb{R}$ having zero as inner point. By
equation (\ref{cumulant-gamma}), this condition is satisfied for
bilateral Gamma processes by taking any non-void closed interval $I \subset (-\lambda^-,\lambda^+)$ with zero as an inner point.

We assume that the volatility $\sigma$ is deterministic and that,
in order to avoid arbitrage, the drift $\alpha$ satisfies the HJM
drift condition
\begin{align*}
\alpha(t,T) = -\sigma(t,T) \Psi'(\Sigma(t,T)), \quad \text{where
$\Sigma(t,T) = - \int_t^T \sigma(t,s)ds$.}
\end{align*}
This condition on the drift is, for instance, derived in \cite[Sec. 2.1]{Eberlein_O}.
Since $\Psi$ is only defined on $I$, we impose the additional
condition
\begin{align}\label{canonical-condition}
\Sigma(t,T) \in I \quad \text{for all $0 \leq t \leq T$.}
\end{align}
It was shown in \cite{Eberlein-Raible} and \cite{K�chler-Naumann} that the short rate
process $r_t = f(t,t)$ is a \textit{Markov process} if and only if the
volatility factorizes, i.e. $\sigma(t,T) = \tau(t) \zeta(T)$. Moreover, provided differentiability of $\tau$ as well as $\tau(t) \neq 0$, $t \geq 0$ and $\zeta(T) \neq 0$, $T \geq 0$, there exists an affine \textit{one-dimensional realization}. Since $\sigma(\cdot,T)$ satisfies for each fixed $T \geq 0$ the ordinary differential equation
 \begin{align*}
\frac{\partial}{\partial t} \sigma(t,T) = \frac{\tau'(t)}{\tau(t)} \sigma(t,T), \quad t \in [0,T]
\end{align*}
we verify by using It\^o's formula \cite[Thm. I.4.57]{Jacod-Shiryaev} for fixed $T \geq 0$ that such a realization
\begin{align}\label{one-dim-real} 
f(t,T) = a(t,T) + b(t,T)Z_t, \quad 0 \leq t \leq T
\end{align}
is given by
\begin{align}\label{affine-comp-real}
a(t,T) = f(0,T) + \int_0^t \alpha(s,T)ds, \quad b(t,T) = \sigma(t,T)
\end{align} 
and the one-dimensional state process $Z$, which is the unique solution of the
stochastic differential equation
\begin{align*}
\left\{ \begin{array}{rcl} dZ_t & = & -\frac{\tau'(t)}{\tau(t)}
Z_t dt + dX_t \medskip
\\ Z_0 & = & 0 \end{array} \right. .
\end{align*}
We can transform this realization into an affine \textit{short rate realization}. By (\ref{one-dim-real}), it holds for the short rate $r_t = a(t,t) + b(t,t)Z_t$, $t \geq 0$, implying
\begin{align*}
Z_t = \frac{r_t - a(t,t)}{b(t,t)}, \quad t \geq 0.
\end{align*}
Inserting this equation into (\ref{one-dim-real}), we get
\begin{align*}
f(t,T) = a(t,T) + \frac{b(t,T)}{b(t,t)} ( r_t - a(t,t) ), \quad 0 \leq t \leq T.
\end{align*}
Incorporating (\ref{affine-comp-real}), we arrive at
\begin{align}\label{short-rate-real-general}
f(t,T) &= f(0,T) - \int_0^t \left[ \Psi'(\Sigma(s,T)) - \Psi'(\Sigma(s,t)) \right] \sigma(s,T) ds
+ \frac{\zeta(T)}{\zeta(t)} \left( r_t- f(0,t) \right).
\end{align}
As an example, let $f(t,T)$ be a term structure model having a \textit{Vasi\^cek} volatility structure, i.e.
\begin{align}
\sigma(t,T) = - \hat{\sigma} e^{-a(T-t)}, \quad 0 \leq t \leq T
\end{align}
with real constants $\hat{\sigma} > 0$ and $a \neq 0$. We assume that $a > 0$ and  $\frac{\hat{\sigma}}{a} < \lambda^+$. Since
\begin{align}\label{capital-sigma} 
\Sigma(t,T) = \frac{\hat{\sigma}}{a} \left( 1-e^{-a(T-t)} \right), \quad 0 \leq t \leq T
\end{align}
we find a suitable interval $I \subset (-\lambda^-,\lambda^+)$ such that
condition (\ref{canonical-condition}) is satisfied. By the results above,
the short rate $r$ is a Markov process and there exists a short rate realization. Equation (\ref{short-rate-real-general}) simplifies to
\begin{align}\label{short-rate-real-vas}
f(t,T) &= f(0,T) + \Psi(\Sigma(0,T)) - \Psi(\Sigma(t,T)) - e^{-a(T-t)} \Psi(\Sigma(0,t))
\\ \notag & \quad + e^{-a(T-t)} \left( r_t- f(0,t) \right).
\end{align} 
We can compute the bond prices $P(t,T)$ by using the following result.

\begin{proposition}
It holds for the bond prices
\begin{align*} 
P(t,T) = e^{\phi_1(t,T) - \phi_2(t,T)r_t}, \quad 0 \leq t \leq T
\end{align*}  
where the functions $\phi_1, \phi_2$ are given by 
\begin{align}\label{realization-part1}
\phi_1(t,T) &= -\int_t^T f(0,s)ds - \int_t^T \Psi \left( \frac{\hat{\sigma}}{a} \left( 1 - e^{-as} \right) \right) ds
\\ \notag &\quad + \int_t^T \Psi \left( \frac{\hat{\sigma}}{a} \left( 1 - e^{-a(s-t)} \right) \right) ds
\\ \notag &\quad + \frac{1}{a} \left( 1 - e^{-a(T-t)} \right) \left[ f(0,t) + \Psi 
\left( \frac{\hat{\sigma}}{a} \left( 1 - e^{-at} \right) \right) \right],
\\ \phi_2(t,T) &= \frac{1}{a} \left( 1 - e^{-a(T-t)} \right).
\end{align}
\end{proposition}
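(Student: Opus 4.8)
The plan is to start from the defining relation between bond prices and forward rates, $P(t,T) = \exp(-\int_t^T f(t,s)\,ds)$, and to feed into the integrand the explicit short rate realization (\ref{short-rate-real-vas}) that has already been derived for the Vasi\^cek volatility structure. In this way the entire proposition reduces to evaluating the single integral $-\int_t^T f(t,s)\,ds$ over the running maturity $s \in [t,T]$ and recognizing the outcome as $\phi_1(t,T) - \phi_2(t,T)r_t$, which is exactly the asserted exponent.

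First I would write $f(t,s)$ from (\ref{short-rate-real-vas}) with $s$ in place of the second argument, substituting the values $\Sigma(0,s) = \frac{\hat{\sigma}}{a}(1-e^{-as})$, $\Sigma(t,s) = \frac{\hat{\sigma}}{a}(1-e^{-a(s-t)})$ and $\Sigma(0,t) = \frac{\hat{\sigma}}{a}(1-e^{-at})$ coming from (\ref{capital-sigma}). Then I would integrate term by term over $s \in [t,T]$. The three summands $f(0,s)$, $\Psi(\Sigma(0,s))$ and $-\Psi(\Sigma(t,s))$ genuinely depend on $s$ and, after the overall minus sign, reproduce verbatim the first three integrals in the definition (\ref{realization-part1}) of $\phi_1$. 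The two remaining summands carry the factor $e^{-a(s-t)}$ but are otherwise constant in $s$, since $\Psi(\Sigma(0,t))$, $r_t$ and $f(0,t)$ do not depend on $s$; hence the only quadrature actually needed is the elementary one $\int_t^T e^{-a(s-t)}\,ds = \frac{1}{a}(1-e^{-a(T-t)}) = \phi_2(t,T)$.

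Collecting these contributions, the summand $-e^{-a(s-t)}\Psi(\Sigma(0,t))$ of $f(t,s)$ contributes $+\phi_2(t,T)\Psi(\frac{\hat{\sigma}}{a}(1-e^{-at}))$ to $-\int_t^T f(t,s)\,ds$, while the last summand $e^{-a(s-t)}(r_t - f(0,t))$ contributes $-\phi_2(t,T)r_t + \phi_2(t,T)f(0,t)$, the short rate entering in exactly this one place. The two pieces $\phi_2(t,T)f(0,t)$ and $\phi_2(t,T)\Psi(\frac{\hat{\sigma}}{a}(1-e^{-at}))$ combine into the bracketed fourth line of (\ref{realization-part1}), whereas $-\phi_2(t,T)r_t$ supplies the linear-in-$r_t$ term of the exponent. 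Assembling everything yields $-\int_t^T f(t,s)\,ds = \phi_1(t,T) - \phi_2(t,T)r_t$, and exponentiating gives the claimed formula for $P(t,T)$.

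There is no genuine analytic obstacle: the cumulant generating function $\Psi$ is defined all along the integration path because condition (\ref{canonical-condition}) has already been arranged so that $\Sigma(s,\cdot) \in I \subset (-\lambda^-,\lambda^+)$, and the only integral that must be computed in closed form is the exponential one above. The real work—and the step where I would be most careful—is the bookkeeping: separating the $s$-dependent integrands from the constants, tracking the signs introduced by the overall minus sign in $P(t,T)$, and verifying that the coefficient of $r_t$ collapses precisely to $-\phi_2(t,T)$ while the $f(0,t)$ and $\Psi$ contributions assemble exactly into the bracketed fourth term of $\phi_1$.
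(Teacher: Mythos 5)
Your proposal is correct and follows exactly the route the paper takes: the paper's own proof simply cites the identity $P(t,T) = e^{-\int_t^T f(t,s)\,ds}$ together with (\ref{capital-sigma}) and (\ref{short-rate-real-vas}), and your term-by-term integration (with $\int_t^T e^{-a(s-t)}\,ds = \phi_2(t,T)$ as the only quadrature) is precisely the bookkeeping the paper leaves implicit.
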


\begin{proof}
The claimed formula for the bond prices follows from the identity
\begin{align*} 
P(t,T) = e^{-\int_t^T f(t,s) ds}
\end{align*}
and equations (\ref{capital-sigma}), (\ref{short-rate-real-vas}).
\end{proof}

The problem is that $\phi_1$ in (\ref{realization-part1}) is difficult to compute for a general driving L\'evy process $X$, because we have to integrate over an expression involving the cumulant generating function $\Psi$. However, for bilateral Gamma processes we can derive (\ref{realization-part1}) in closed form. For this aim, we consider the \textit{dilogarithm
function} \cite[page 1004]{Abramowitz}, defined as
\begin{align*}
{\rm dilog}(x) := - \int_1^x \frac{\ln t}{t-1} dt, \quad x \in
\mathbb{R}_+
\end{align*} 
which will appear in our closed form representation. The dilogarithm function has the series expansion
\begin{align*}
{\rm dilog}(x) = \sum_{k=1}^{\infty} (-1)^k \frac{(x-1)^k}{k^2},
\quad 0 \leq x \leq 2
\end{align*}
and moreover the identity
\begin{align*}
{\rm dilog}(x) + {\rm dilog} \left( \frac{1}{x} \right) = -
\frac{1}{2} (\ln x)^2, \quad 0 \leq x \leq 1
\end{align*}
is valid, see \cite[page 1004]{Abramowitz}. For a computer program, the
dilogarithm function is thus as easy to evaluate as the natural
logarithm. The following auxiliary result will be useful for the computation of the bond prices $P(t,T)$.

\begin{lemma}\label{lemma-dilog}
Let $a,b,c,d,\lambda \in \mathbb{R}$ be such that $a \leq b$ and $c >
0 ,\lambda \neq 0$. Assume furthermore that $c+de^{\lambda x} > 0$
for all $x \in [a,b]$. Then we have
\begin{align*}
\int_a^b \ln \left( c + d e^{\lambda x} \right) dx = (b-a) \ln(c)
- \frac{1}{\lambda} {\rm dilog} \left( 1 + \frac{d}{c}
e^{\lambda b} \right) + \frac{1}{\lambda} {\rm dilog} \left( 1
+ \frac{d}{c} e^{\lambda a} \right).
\end{align*}
\end{lemma}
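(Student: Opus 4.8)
The plan is to peel off the constant $c$ and then recognise the remaining integral as essentially the defining integral of the dilogarithm. Since $c>0$, I would first write
\begin{align*}
\ln \left( c + d e^{\lambda x} \right) = \ln(c) + \ln \left( 1 + \frac{d}{c} e^{\lambda x} \right),
\end{align*}
so that $\int_a^b \ln(c+de^{\lambda x})\,dx = (b-a)\ln(c) + J$ with $J := \int_a^b \ln(1+\frac{d}{c}e^{\lambda x})\,dx$. The standing hypothesis $c+de^{\lambda x}>0$ together with $c>0$ guarantees that the argument $1+\frac{d}{c}e^{\lambda x}$ is positive on $[a,b]$, so the logarithms, and later the dilogarithm, are all evaluated inside their domain $(0,\infty)$.

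To evaluate $J$ I would substitute $u = u(x) := 1 + \frac{d}{c} e^{\lambda x}$, for which $du = \lambda (u-1)\,dx$. When $d \neq 0$ the map $x \mapsto u(x)$ is strictly monotone (because $\lambda \neq 0$) and, since $\frac{d}{c}e^{\lambda x}$ keeps a constant sign, it stays away from the value $1$; hence the substitution is a genuine change of variables onto an interval avoiding the point $u=1$, giving
\begin{align*}
J = \frac{1}{\lambda} \int_{u(a)}^{u(b)} \frac{\ln u}{u-1}\, du.
\end{align*}
The decisive observation is that, straight from the definition ${\rm dilog}(x) = -\int_1^x \frac{\ln t}{t-1}\,dt$, one has ${\rm dilog}'(x) = -\frac{\ln x}{x-1}$, so $-{\rm dilog}$ is an antiderivative of $\frac{\ln u}{u-1}$ on $(0,\infty)$.

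Applying the fundamental theorem of calculus then yields
\begin{align*}
J = \frac{1}{\lambda} \Big( -{\rm dilog}(u(b)) + {\rm dilog}(u(a)) \Big) = -\frac{1}{\lambda} {\rm dilog} \left( 1 + \frac{d}{c} e^{\lambda b} \right) + \frac{1}{\lambda} {\rm dilog} \left( 1 + \frac{d}{c} e^{\lambda a} \right),
\end{align*}
and adding the term $(b-a)\ln(c)$ reproduces exactly the asserted identity. I would then dispose of the degenerate case $d=0$ directly: there the left-hand side is simply $(b-a)\ln(c)$, while on the right both arguments of the dilogarithm equal $1$ and ${\rm dilog}(1)=0$, so the formula persists.

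The computation is essentially routine, and the only point demanding genuine care is the legitimacy of the substitution. The main (minor) obstacle is to confirm that $u(x)$ neither leaves the domain $(0,\infty)$ of the dilogarithm nor crosses the removable singularity of $\frac{\ln u}{u-1}$ at $u=1$; both are ruled out by the hypotheses $c>0$, $\lambda \neq 0$, $c+de^{\lambda x}>0$ and the sign-definiteness of $\frac{d}{c}e^{\lambda x}$ when $d \neq 0$. Everything else is an application of the fundamental theorem of calculus.
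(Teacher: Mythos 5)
Your proof is correct and follows essentially the same route as the paper: split off $(b-a)\ln(c)$, substitute $u = 1 + \frac{d}{c}e^{\lambda x}$ to reduce the remaining integral to $\frac{1}{\lambda}\int \frac{\ln t}{t-1}\,dt$, and recognize this as the defining integral of the dilogarithm. Your additional care about the degenerate case $d=0$ and the legitimacy of the substitution is sound, though the paper passes over these points silently.
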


\begin{proof}
With $\varphi(x) := 1 + \frac{d}{c} e^{\lambda x}$ we obtain by
making a substitution
\begin{align*}
\int_a^b \ln \left( c + d e^{\lambda x} \right) dx &= (b-a) \ln(c)
+ \int_a^b \ln \left( 1 + \frac{d}{c} e^{\lambda x} \right) dx
\\ &= (b-a) \ln(c) + \frac{1}{\lambda} \int_{\varphi(a)}^{\varphi(b)} \frac{\ln t}{t-1} dt
\\ &= (b-a) \ln(c)
- \frac{1}{\lambda} {\rm dilog} \left( 1 + \frac{d}{c}
e^{\lambda b} \right) + \frac{1}{\lambda} {\rm dilog} \left( 1
+ \frac{d}{c} e^{\lambda a} \right).
\end{align*}
\end{proof}

Now assume the driving process $X$ is a bilateral Gamma process $\Gamma(\alpha^+, \lambda^+; \alpha^-, \lambda^-)$. We obtain a formula for the bond prices $P(t,T)$ in terms of the natural logarithm and the dilogarithm function.

\begin{proposition}
The function $\phi_1$ in (\ref{realization-part1}) has the representation
\begin{align*}
\phi_1(t,T) &= -\int_t^T f(0,s)ds 
\\ &+ \frac{\alpha^+}{a} [D_1(\lambda^+,T) - D_1(\lambda^+,t) 
- D_1(\lambda^+,T-t) + D_1(\lambda^+,0)]
\\ &+ \frac{\alpha^-}{a} [D_0(\lambda^-,T) - D_0(\lambda^-,t) 
- D_0(\lambda^-,T-t) + D_0(\lambda^-,0)]
\\ &+ \frac{1}{a} \left( 1 - e^{-a(T-t)} \right) [ f(0,t) + \alpha^+ L_1(\lambda^+) + \alpha^- L_0(\lambda^-) ],
\end{align*}
where
\begin{align*}
D_{\beta}(\lambda,t) &= {\rm dilog} \left( 1 + \frac{\hat{\sigma} e^{-at}}{\lambda^+ a + (-1)^{\beta} \hat{\sigma}}
\right), \quad \beta \in \{ 0,1 \},
\\ L_{\beta}(\lambda) &= \ln \left( \frac{\lambda a}{\lambda a + (-1)^{\beta} \hat{\sigma} (1 - e^{-at})} \right), \quad \beta \in \{ 0,1 \}.
\end{align*}
\end{proposition}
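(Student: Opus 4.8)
The plan is to insert the bilateral Gamma cumulant generating function (\ref{cumulant-gamma}) into the expression (\ref{realization-part1}) for $\phi_1$ and to reduce every resulting integral to the form handled by Lemma \ref{lemma-dilog}. Observe first that, apart from the term $-\int_t^T f(0,s)\,ds$, the quantity $\phi_1(t,T)$ is built from two integrals of the map $u \mapsto \Psi\big(\frac{\hat{\sigma}}{a}(1-e^{-au})\big)$, taken over $[t,T]$ for the argument $\Sigma(0,s)$ and, after the substitution $v=s-t$, over $[0,T-t]$ for the argument $\Sigma(t,s)$, together with the single pointwise value $\Psi\big(\frac{\hat{\sigma}}{a}(1-e^{-at})\big)=\Psi(\Sigma(0,t))$ appearing in the last bracket, where $\Sigma$ is as in (\ref{capital-sigma}).

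First I would treat the pointwise value. Writing $\Sigma(0,t)=\frac{\hat{\sigma}}{a}(1-e^{-at})$ and multiplying numerator and denominator inside each logarithm of (\ref{cumulant-gamma}) by $a$, one obtains
\begin{align*}
\Psi(\Sigma(0,t)) = \alpha^+ \ln\!\left( \frac{\lambda^+ a}{\lambda^+ a - \hat{\sigma}(1-e^{-at})} \right) + \alpha^- \ln\!\left( \frac{\lambda^- a}{\lambda^- a + \hat{\sigma}(1-e^{-at})} \right),
\end{align*}
which is exactly $\alpha^+ L_1(\lambda^+) + \alpha^- L_0(\lambda^-)$. Multiplying by $\frac{1}{a}(1-e^{-a(T-t)})$ and adding $f(0,t)$ reproduces the final line of the asserted representation.

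For the two integrals I would substitute (\ref{cumulant-gamma}) and rewrite the denominators in exponential form, namely $\lambda^+ - \frac{\hat{\sigma}}{a}(1-e^{-au}) = \frac{1}{a}\big((\lambda^+ a - \hat{\sigma}) + \hat{\sigma}e^{-au}\big)$ and $\lambda^- + \frac{\hat{\sigma}}{a}(1-e^{-au}) = \frac{1}{a}\big((\lambda^- a + \hat{\sigma}) - \hat{\sigma}e^{-au}\big)$. Hence the integrand is, up to additive constants, a combination of terms $\ln(c + d e^{-au})$, and Lemma \ref{lemma-dilog} applied with $\lambda = -a$ turns each into a linear piece $u\ln c$ plus a dilogarithm $\frac{1}{a}\,{\rm dilog}(1 + \frac{d}{c}e^{-au})$. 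With the coefficients $(c,d)=(\lambda^+ a - \hat{\sigma}, \hat{\sigma})$ and $(c,d)=(\lambda^- a + \hat{\sigma}, -\hat{\sigma})$ these dilogarithms reproduce the functions $D_1(\lambda^+,\cdot)$ and $D_0(\lambda^-,\cdot)$ of the statement, the sign in each numerator being dictated by $d=\pm\hat{\sigma}$.

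Finally I would collect the four boundary evaluations. The combination $-\int_t^T(\cdot)+\int_0^{T-t}(\cdot)$ contributes the dilogarithm at the points $T,t,T-t,0$ with signs $+,-,-,+$, which yields the bracketed difference $D_1(\lambda^+,T)-D_1(\lambda^+,t)-D_1(\lambda^+,T-t)+D_1(\lambda^+,0)$ and its analogue for $D_0(\lambda^-,\cdot)$, each with prefactor $\frac{\alpha^\pm}{a}$. The main point to verify is that all the constant and linear contributions, that is the terms $u\ln c$ together with the $\ln a$ and $\ln\lambda^\pm$ constants coming from (\ref{cumulant-gamma}), cancel: their total weight is proportional to the signed lengths $-T + t + (T-t) - 0 = 0$, so they drop out and leave only the dilogarithm terms. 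This cancellation, together with the careful sign bookkeeping forced by the choice $\lambda=-a<0$ in Lemma \ref{lemma-dilog}, is the one delicate part of the argument; once it is checked, assembling the pieces with the $-\int_t^T f(0,s)\,ds$ term gives the claimed representation of $\phi_1$.
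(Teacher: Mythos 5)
Your proposal is correct and is exactly the computation the paper leaves implicit: its proof consists of the single sentence that one inserts the cumulant generating function (\ref{cumulant-gamma}) into (\ref{realization-part1}) and applies Lemma \ref{lemma-dilog}. Your fleshed-out version — rewriting $\lambda^{\pm}\mp\Sigma$ as $\frac{1}{a}(c+de^{-au})$, applying the lemma with $\lambda=-a$, checking that the linear terms cancel via $-(T-t)+(T-t)=0$, and noting that the sign of $d$ fixes the numerator in the dilogarithm argument (which the printed $D_{\beta}$ obscures) — is the same route, carried out carefully.
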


\begin{proof}
The assertion follows by inserting the cumulant generating function (\ref{cumulant-gamma}) of the bilateral Gamma process $X$ into (\ref{realization-part1}) and using Lemma \ref{lemma-dilog}.
\end{proof}

\section{Conclusion}

We have seen above that bilateral Gamma processes can be used for modelling financial data. 
One reason for that consists in their four parameters, which ensure good fitting properties. They share this number of parameters with several other classes of processes or distributions mentioned in Section \ref{sec-related}. 
Moreover, their trajectories have infinitely many jumps on every interval, which makes the models quite realistic. On the contrary to other well studied classes of L\'evy processes, these trajectories have finite variation on every bounded interval. Thus one can decompose every trajectory into its increasing and decreasing part and use it for statistical purposes. Other advantages of this class of processes are the simple form of the L\'evy characteristics and the cumulant generating function as well as its derivative. These enable a transparent construction of estimation procedures for the parameters and make the calculations in certain term structure models easy.\\

\end{document}